\theoremstyle{plain}
\newtheorem{thm}{Theorem}[section]
  \theoremstyle{definition}
  \newtheorem{defn}[thm]{Definition}
  \theoremstyle{plain}
  \newtheorem{prop}[thm]{Proposition}
  \theoremstyle{plain}
  \newtheorem{lem}[thm]{Lemma}
  \theoremstyle{remark}
  \newtheorem{rem}[thm]{Remark}
  \theoremstyle{plain}
  \newtheorem{cor}[thm]{Corollary}
\newtheorem{theorem}{Theorem}[section]
\newtheorem{lemma}[theorem]{Lemma}
\newcommand{\be}{\begin{equation}}\newcommand{\ee}{\end{equation}}
\newcommand{\norm}[1]{\Vert #1 \Vert}
\begin{document}

\title{Absolute Continuity, Interpolation and the Lyapunov Order}

\author{Paul S. Muhly and Baruch Solel}

\address{Department of Mathematics\\
 University of Iowa\\
 Iowa City, IA 52242}

\email{paul-muhly@uiowa.edu}

\address{Department of Mathematics\\
 Technion\\
 32000 Haifa\\
 Israel}

\email{mabaruch@techunix.technion.ac.il}

\keywords{Nevanlinna-Pick interpolation, representations, Hardy algebra, absolute
continuity, Lyapunov order.}

\thanks{The research of both authors was supported in part by a U.S.-Israel
Binational Science Foundation grant. The second author was also supported by the Technion V.P.R. Fund.}

\subjclass[2000]{Primary: 15A24, 46H25, 47L30, 47L55, Secondary: 46H25, 47L65}
\begin{abstract}
We extend our Nevanlinna-Pick theorem for Hardy algebras and their
representations to cover interpolation at the absolutely continuous
points of the boundaries of their discs of representations. The Lyapunov
order plays a crucial role in our analysis.
\end{abstract}
\maketitle

\section{Introduction}

The celebrated theorem of Nevanlinna and Pick asserts that if $n$
distinct points, $z_{1},z_{2},\ldots,z_{n}$, are given in the open
unit disc $\mathbb{D}$ and if $n$ other complex numbers are also
given, $w_{1},w_{2},\ldots,w_{n}$, then there is a function $f$
in the Hardy algebra $H^{\infty}(\mathbb{T})$, with norm at most
one, such that $f(z_{i})=w_{i}$, $i=1,2,\ldots,n$, if and only if
the Pick matrix \[
\left(\frac{1-w_{i}\overline{w_{j}}}{1-z_{i}\overline{z_{j}}}\right)_{i,j=1}^{n}\]
is positive semidefinite. In \cite[Theorem 5.3]{MS2004}, we generalized
this Nevanlinna-Pick theorem to the setting of Hardy algebras over
$W^{*}$-correspondences. Here we intend to push the work in \cite{MS2004}
further, using tools developed in \cite{MS2010}. In a sense that
we shall make precise, we show that there is a condition similar to
the positivity of the Pick matrix that allows one to interpolate at
``absolutely continuous'' points of the boundaries of the domains
considered in \cite{MS2004}. Before stating our main theorem, we
want to see how one might try to extend the Nevanlinna-Pick theorem
to cover families of absolutely continuous contraction operators along
the lines suggested by \cite[Theorem 6.1]{MS2004}.

For this purpose, suppose $H$ is a Hilbert space and $Z:=(Z_{1},Z_{2},\ldots,Z_{n})$
is an $n$-tuple of operators in $B(H)$. Then $Z$ defines a completely
positive operator $\Phi_{Z}$ on the $n\times n$ matrices over $B(H)$
via the formula\begin{align*}
\Phi_{Z}((a_{ij})): & =\left[\begin{array}{cccc}
Z_{1}\\
 & Z_{2}\\
 &  & \ddots\\
 &  &  & Z_{n}\end{array}\right]\left[\begin{array}{cccc}
a_{11} & a_{12} & \cdots & a_{1n}\\
a_{21} & a_{22}\\
\vdots &  & \ddots\\
a_{n1} & a_{n2} & \cdots & a_{nn}\end{array}\right]\left[\begin{array}{cccc}
Z_{1}\\
 & Z_{2}\\
 &  & \ddots\\
 &  &  & Z_{n}\end{array}\right]^{*}\\
= & (Z_{i}a_{ij}Z_{j}^{*}).\end{align*}
If all the $Z_{i}$'s have norm less than $1$, then $I-\Phi_{Z}$
is an invertible map on $M_{n}(B(H))$ and $(I-\Phi_{Z})^{-1}$ is
also completely positive. The following theorem, then, is a special
case of \cite[Theorem 6.1]{MS2004}.
\begin{thm}
\label{thm:Classical-NP} Suppose $Z_{1},Z_{2},\cdots,Z_{n}$ are
$n$ distinct operators in $B(H)$, each of norm less than $1$, and
suppose $W_{1},W_{2},\cdots,W_{n}$ are $n$ operators in $B(H)$.
Then there is an function $f$ in $H^{\infty}(\mathbb{T})$, with
supremum norm at most $1$, such that $f(Z_{i})=W_{i}$, $i=1,2,\ldots,n$,
where $f(Z_{i})$ is defined through the Riesz functional calculus,
if and only if the Pick map\begin{equation}
(I-\Phi_{W})\circ(I-\Phi_{Z})^{-1}\label{eq:NP-positivity}\end{equation}
defined on $M_{n}(B(H))$ is completely positive.
\end{thm}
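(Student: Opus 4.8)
The plan is to reduce the $n$-point problem to a single-operator interpolation problem and then recognize the resulting statement as the specialization of \cite[Theorem 6.1]{MS2004}. First I would assemble the data into single operators on $\widetilde{H}:=H\oplus\cdots\oplus H$ ($n$ copies): set $\mathcal{Z}:=\mathrm{diag}(Z_{1},\ldots,Z_{n})$ and $\mathcal{W}:=\mathrm{diag}(W_{1},\ldots,W_{n})$, and identify $M_{n}(B(H))$ with $B(\widetilde{H})$. Under this identification $\Phi_{Z}$ and $\Phi_{W}$ become the elementary completely positive maps $\mathrm{Ad}_{\mathcal{Z}}\colon T\mapsto\mathcal{Z}T\mathcal{Z}^{*}$ and $\mathrm{Ad}_{\mathcal{W}}\colon T\mapsto\mathcal{W}T\mathcal{W}^{*}$. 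Since $\mathcal{Z}$ is block diagonal, $f(\mathcal{Z})=\mathrm{diag}(f(Z_{1}),\ldots,f(Z_{n}))$ for every $f$ in the Riesz functional calculus, so the interpolation conditions $f(Z_{i})=W_{i}$ for all $i$ are equivalent to the single equation $f(\mathcal{Z})=\mathcal{W}$; moreover $\|\mathcal{Z}\|=\max_{i}\|Z_{i}\|<1$, so $\mathcal{Z}$ is a strict contraction and $(I-\mathrm{Ad}_{\mathcal{Z}})^{-1}(T)=\sum_{k\ge 0}\mathcal{Z}^{k}T(\mathcal{Z}^{*})^{k}$ converges in norm. The Pick map \eqref{eq:NP-positivity} becomes $(I-\mathrm{Ad}_{\mathcal{W}})\circ(I-\mathrm{Ad}_{\mathcal{Z}})^{-1}$, so the whole theorem is equivalent to its one-point version for $\mathcal{Z},\mathcal{W}\in B(\widetilde{H})$.

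For necessity I would argue directly. Suppose $f\in H^{\infty}(\mathbb{T})$ with $\|f\|_{\infty}\le 1$ and $f(\mathcal{Z})=\mathcal{W}$. The contractivity of $f$ is equivalent to the Szeg\H{o}/Pick factorization $1-f(z)\overline{f(w)}=(1-z\bar{w})\,k(z,w)$ with $k$ a positive-definite kernel on $\mathbb{D}$, which I would write as $k(z,w)=\sum_{\alpha}g_{\alpha}(z)\overline{g_{\alpha}(w)}$ with $g_{\alpha}$ holomorphic. Substituting the holomorphic functional calculus $z\mapsto\mathcal{Z}$ (legitimate because $\|\mathcal{Z}\|<1$) and using that $\mathcal{W}=f(\mathcal{Z})$ commutes with $\mathcal{Z}$, this scalar identity turns into the identity of maps $I-\mathrm{Ad}_{\mathcal{W}}=\Psi\circ(I-\mathrm{Ad}_{\mathcal{Z}})$, where $\Psi(T):=\sum_{\alpha}g_{\alpha}(\mathcal{Z})\,T\,g_{\alpha}(\mathcal{Z})^{*}$ is manifestly completely positive. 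Composing on the right with $(I-\mathrm{Ad}_{\mathcal{Z}})^{-1}$ shows that the Pick map equals $\Psi$ and is therefore completely positive. As a sanity check, when $H=\mathbb{C}$ the maps $I-\Phi_{Z}$ and $I-\Phi_{W}$ are the Schur multipliers by $(1-z_{i}\bar{z}_{j})$ and $(1-w_{i}\bar{w}_{j})$, so the Pick map is the Schur multiplier by the classical Pick matrix $(\frac{1-w_{i}\bar{w}_{j}}{1-z_{i}\bar{z}_{j}})$, and its complete positivity is exactly positive semidefiniteness of that matrix.

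Sufficiency is the substantive direction, and here I would invoke \cite[Theorem 6.1]{MS2004} rather than reprove it. Taking $M=\mathbb{C}$, $E=\mathbb{C}$ (so that $H^{\infty}(E)\cong H^{\infty}(\mathbb{T})$) and $\sigma$ the representation of $\mathbb{C}$ on $\widetilde{H}$, the point $\mathcal{Z}$ lies in the open unit ball of $B(\widetilde{H})$, which is the disc of representations attached to $\sigma$ in this case; the evaluation of $H^{\infty}(E)$ at $\mathcal{Z}$ agrees with the Riesz functional calculus, and the positivity hypothesis of that theorem is precisely complete positivity of $(I-\mathrm{Ad}_{\mathcal{W}})\circ(I-\mathrm{Ad}_{\mathcal{Z}})^{-1}$. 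The step I expect to be the main obstacle is the content hidden inside that citation: from complete positivity one must extract, via a Kolmogorov/Stinespring decomposition of the Pick map, an intertwining contraction and then lift it, through a commutant-lifting argument in the Hardy algebra, to an element of $H^{\infty}(\mathbb{T})$ of norm at most one realizing $\mathcal{W}=f(\mathcal{Z})$. Verifying that the lifted object is genuinely multiplication by a single scalar $H^{\infty}$ function of norm $\le 1$, and that it reproduces all the prescribed values, is the crux; the block-diagonal reduction above then guarantees that a solution for $\mathcal{Z},\mathcal{W}$ automatically solves the original $n$ interpolation equations.
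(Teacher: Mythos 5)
Your proposal is sound, but note that the paper itself offers no proof of this statement at all: Theorem \ref{thm:Classical-NP} is simply recorded as the special case of \cite[Theorem 6.1]{MS2004} obtained by taking $M=E=\mathbb{C}$ and $\sigma$ a multiple of the identity, so that the disc of representations is the open unit ball, $\sigma(M)'=B(H)$, and point evaluation is the Riesz functional calculus. Measured against that, your write-up adds two genuine pieces of content. The block-diagonal reduction is a harmless repackaging --- indeed $\Phi_{Z}$ is \emph{defined} in the Introduction as $\mathrm{Ad}_{\mathcal{Z}}$ on $M_{n}(B(H))\cong B(H^{(n)})$ with $\mathcal{Z}=\mathrm{diag}(Z_{1},\ldots,Z_{n})$, so your one-point reformulation is literally the same statement --- but it is not needed, since \cite[Theorem 6.1]{MS2004} already treats $n$ points directly. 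More substantively, your hereditary-functional-calculus proof of necessity is a correct, self-contained argument that the paper does not supply. Two small points deserve attention there: the substitution actually produces $I-\mathrm{Ad}_{\mathcal{W}}=(I-\mathrm{Ad}_{\mathcal{Z}})\circ\Psi$ rather than $\Psi\circ(I-\mathrm{Ad}_{\mathcal{Z}})$, but since each $g_{\alpha}(\mathcal{Z})$ commutes with $\mathcal{Z}$ the map $\Psi$ commutes with $\mathrm{Ad}_{\mathcal{Z}}$ and the two factorizations agree; and one should justify absolute convergence of $\sum_{\alpha}g_{\alpha}(\mathcal{Z})Tg_{\alpha}(\mathcal{Z})^{*}$, which follows from $\Vert\mathcal{Z}\Vert<1$ together with the bound $k(z,z)\leq(1-|z|^{2})^{-1}$ and a Cauchy--Schwarz estimate on the Taylor coefficients of $k$. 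For the sufficiency direction --- the real content --- you, like the paper, rest entirely on the commutant-lifting machinery behind \cite[Theorem 6.1]{MS2004}; your identification of the data needed to invoke that theorem is correct, so the citation closes the argument, and nothing further is required.
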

Observe that when $H$ is one-dimensional this theorem recovers the
classical theorem of Nevanlinna and Pick that we stated at the outset.
Now Sz.-Nagy and Foia\c{s} have shown that the proper domain for
their $H^{\infty}$-functional calculus is the collection of all \emph{absolutely
continuous contractions}. One way to say that a contraction $T$ is
absolutely continuous is to say that when $T$ is decomposed as $T=T_{cnu}+U$,
where $T_{cnu}$ is completely non-unitary and $U$ is unitary, then
the spectral measure of $U$ is absolutely continuous with respect
to Lebesgue measure on the circle. The content of \cite[Theorems III.2.1 and III.2.3]{Sz-NFBK2010}
is that a contraction $T$ is absolutely continuous if and only if
the $H^{\infty}(\mathbb{T})$-functional calculus may be evaluated
on $T$. It is therefore of interest to modify the hypothesis of Theorem
\ref{thm:Classical-NP} and ask for conditions that allow one to interpolate
in the wider context where the variables $Z_{i}$ are assumed to be
merely absolutely continuous contractions. In that setting the map
$I-\Phi_{Z}$ need no longer be invertible, and so it may not be possible
to form the generalized Pick operator $(I-\Phi_{W})\circ(I-\Phi_{Z})^{-1}$,
let alone determine whether or not it is completely positive. However,
there is a notion from matrix analysis, called the ``Lyapunov order'',
which suggests a replacement for the condition that the Pick operator
$(I-\Phi_{W})\circ(I-\Phi_{Z})^{-1}$be completely positive. To formulate
it we require an idea from the theory of completely positive maps
that we analyzed in \cite{MS2010}.
\begin{defn}
\label{def:_Superharmonic} Let $\Phi$ be a completely positive map
on a $W^{*}$-algebra $A$. An element $a\in A$ is called \emph{superharmonic}
for $\Phi$ in case $a\geq0$ and $\Phi(a)\leq a$; $a$ is called
\emph{pure superharmonic} in case $a$ is superharmonic and $\Phi^{n}(a)\searrow0$
as $n\to\infty$.
\end{defn}
The superharmonic elements for a completely positive map evidently
form a convex subset in the cone of all non-negative elements in the
$W^{*}$-algebra $A$.
\begin{defn}
\label{def:_Lyapunov_domination} Let $B$ be a $W^{*}$-algebra and
suppose $A$ is a sub-$W^{*}$-algebra of $B$. Suppose $\Phi:A\to A$
is a completely positive map and that $\Psi:B\to B$ is also completely
positive. Then we say $\Psi$ \emph{completely dominates} $\Phi$
\emph{in the sense of Lyapunov} in case every pure superharmonic element
of $M_{n}(A)$ for $\Phi_{n}$ is superharmonic for $\Psi_{n}$, where
$\Phi_{n}$ (resp. $\Psi_{n}$) is the usual promotion of $\Phi$
(resp. $\Psi$) to $M_{n}(A)$ (resp. $M_{n}(B)$).

The following proposition links the notion of complete Lyapunov domination
to the complete positivity of \eqref{eq:NP-positivity}.\end{defn}
\begin{prop}
\label{pro:NP_vs_Lyapunov} Suppose that $A$ is a sub-$W^{*}$-algebra
of a $W^{*}$-algebra $B$ and suppose $\Phi$ and $\Psi$ are completely
positive maps on $A$ and $B$, respectively. Assume that $\Vert\Phi\Vert<1$,
so $I-\Phi$ is invertible. Then the Pick operator, $P:=(I-\Psi)\circ(I-\Phi)^{-1}$,
is completely positive if and only if $\Psi$ completely dominates
$\Phi$ in the sense of Lyapunov.\end{prop}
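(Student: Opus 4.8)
The plan is to reduce the complete-positivity statement to a single-level positivity statement and then exploit the Neumann series for $(I-\Phi)^{-1}$, using the hypothesis $\norm{\Phi}<1$ to force every superharmonic element to be pure. By definition $P$ is completely positive precisely when each amplification $P_{n}=(I-\Psi_{n})\circ(I-\Phi_{n})^{-1}\colon M_{n}(A)\to M_{n}(B)$ is positive, and $\Psi$ completely dominates $\Phi$ precisely when, for every $n$, each pure superharmonic element of $M_{n}(A)$ for $\Phi_{n}$ is superharmonic for $\Psi_{n}$. Since $\Phi_{n},\Psi_{n}$ are again completely positive, $M_{n}(A)\subseteq M_{n}(B)$ is again an inclusion of $W^{*}$-algebras, and $\norm{\Phi_{n}}=\norm{\Phi}<1$, it suffices to prove the assertion at a single level, namely that $P$ is positive if and only if every pure superharmonic element of $A$ for $\Phi$ is superharmonic for $\Psi$, and then to apply this to each pair $(\Phi_{n},\Psi_{n})$.

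Next I would set up the key bijection. Because $\norm{\Phi}<1$, the Neumann series $(I-\Phi)^{-1}=\sum_{k\ge 0}\Phi^{k}$ converges in norm; each $\Phi^{k}$ is completely positive, hence so is the limit, which shows $(I-\Phi)^{-1}$ is positive. I claim that $a=(I-\Phi)^{-1}(b)$ and $b=(I-\Phi)(a)$ set up a bijection between the positive cone of $A$ and the set of superharmonic elements for $\Phi$: if $b\ge0$ then $a\ge0$ by positivity of the inverse while $a-\Phi(a)=b\ge0$ gives superharmonicity, and conversely if $a$ is superharmonic then $b=a-\Phi(a)\ge0$. Moreover $\norm{\Phi}<1$ makes every superharmonic element automatically pure: superharmonicity gives $\Phi^{k+1}(a)\le\Phi^{k}(a)$, so $(\Phi^{k}(a))_{k}$ decreases, while $\norm{\Phi^{k}(a)}\le\norm{\Phi}^{k}\norm{a}\to0$ forces $\Phi^{k}(a)\searrow0$. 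Thus superharmonic and pure superharmonic coincide for $\Phi$ under our hypothesis.

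Finally I would read off the equivalence by a direct computation: for $b\ge0$ and $a=(I-\Phi)^{-1}(b)$ one has $P(b)=(I-\Psi)(a)=a-\Psi(a)$, which is $\ge0$ exactly when $\Psi(a)\le a$, that is, exactly when $a$ is superharmonic for $\Psi$. As $b$ ranges over the positive cone of $A$, the element $a$ ranges over all superharmonic elements for $\Phi$, which are precisely the pure superharmonic ones. Hence $P$ is positive if and only if every pure superharmonic element for $\Phi$ is superharmonic for $\Psi$, which after amplification is the proposition. I expect the only genuinely delicate point to be the reconciliation of the word ``pure'' in the definition of Lyapunov domination with the automatic purity just established; the remainder is Neumann-series bookkeeping, together with the observation that positivity of $a$ is unambiguous under the inclusion $A\subseteq B$.
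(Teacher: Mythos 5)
Your argument is correct and is essentially the paper's own proof: reduce to a single matrix level, use the Neumann series to show $(I-\Phi)^{-1}$ is positive and that superharmonic elements for $\Phi$ are automatically pure, and then observe that $b\mapsto(I-\Phi)^{-1}(b)$ carries the positive cone of $A$ onto the superharmonic elements for $\Phi$, so that $P(b)=a-\Psi(a)\geq0$ is exactly Lyapunov domination. The paper's version is terser but follows the identical route, so no further comment is needed.
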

\begin{proof}
Note that the hypothesis that $\Vert\Phi\Vert<1$ implies that every
superharmonic element of $A$ is pure superharmonic. Also note that
it suffices to prove that $P$ is positive if and only if $\{a\in A\mid a\geq0,\,\,\Phi(a)\leq a\}\subseteq\{b\in B\mid b\geq0,\,\,\Psi(b)\leq b\}$,
since the same argument will work for every $n$. Suppose, then, that
$P$ is positive and suppose that $a\geq0$ and $\Phi(a)\leq a$.
Then $(I-\Phi)(a)\geq0$. Consequently, $0\leq P((I-\Phi)(a))=(I-\Psi)(a)$,
showing that $a\geq\Psi(a)$. Suppose, conversely, that $b\geq0$.
Then since $\Vert\Phi\Vert<1$ and $\Phi$ is positive, $(I-\Phi)^{-1}=\sum_{n\geq0}\Phi^{n}$
is positive. Consequently, $a=(I-\Phi)^{-1}(b)$ is positive. But
also, since $(I-\Phi)(a)=b$ is positive, $a\geq\Psi(a)$, by hypothesis.
That is, $P(b)=a-\Psi(a)\geq0$, which is what we want to show.
\end{proof}
Our extension of Theorem \ref{thm:Classical-NP} can now be formulated
as
\begin{thm}
\label{thm:Classical_NP-revised} Suppose $Z_{1},Z_{2},\cdots,Z_{n}$
are $n$ distinct absolutely continuous contractions on a Hilbert
space $H$ and suppose $W_{1},W_{2},\cdots,W_{n}$ are $n$ contractions
on $H$, then there is a function $f\in H^{\infty}(\mathbb{T})$,
of norm at most $1$, such that $f(Z_{i})=W_{i}$, $i=1,2,\ldots,n$,
if and only if $\Phi_{W}$ completely dominates $\Phi_{Z}$ in the
sense of Lyapunov.
\end{thm}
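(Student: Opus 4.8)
The plan is to prove the two implications separately, using Theorem~\ref{thm:Classical-NP} and Proposition~\ref{pro:NP_vs_Lyapunov} as the strict-contraction prototypes and invoking the absolute continuity of the $Z_{i}$ only where the boundary of the disc is actually reached. Throughout I would lean on the following reformulation of complete Lyapunov domination, which is the boundary analogue of the computation in the proof of Proposition~\ref{pro:NP_vs_Lyapunov}: a positive $a$ is \emph{pure} superharmonic for $\Phi_{Z}$ exactly when it has the telescoping representation $a=\sum_{k\geq0}\Phi_{Z}^{k}(b)$ with $b=(I-\Phi_{Z})(a)\geq0$ and the series convergent (the remainder $\Phi_{Z}^{N}(a)$ decreasing to $0$), so that ``$\Phi_{W}$ completely dominates $\Phi_{Z}$'' says precisely that $(I-\Phi_{W})\bigl(\sum_{k}\Phi_{Z,n}^{k}(b)\bigr)\geq0$ for every such $b$ and every $n$. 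This is a densely defined version of the positivity of $(I-\Phi_{W})\circ(I-\Phi_{Z})^{-1}$, forced to be only densely defined because $I-\Phi_{Z}$ is no longer invertible once the $Z_{i}$ are permitted to have norm one.

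For the necessity direction I would argue directly, with no approximation. Suppose $f=\sum_{m}c_{m}z^{m}\in H^{\infty}(\mathbb{T})$, $\norm{f}\leq1$, and $f(Z_{i})=W_{i}$, and let $a=(a_{ij})$ be pure superharmonic for $\Phi_{Z,n}$, so $a_{ij}=\sum_{k}Z_{i}^{k}b_{ij}Z_{j}^{\ast k}$ with $(b_{ij})\geq0$. Writing $(b_{ij})=d^{\ast}d$ and forming the observability vector $\xi=(\xi_{k})_{k\geq0}$, $\xi_{k}=(Z_{j}^{\ast k}h_{j})_{j}$, one checks the two identities $\langle a(h_{j}),(h_{i})\rangle=\norm{(I\otimes d)\xi}^{2}$ and, since $Z_{j}^{\ast k}W_{j}^{\ast}=\sum_{m}\overline{c_{m}}Z_{j}^{\ast(k+m)}$, $\langle\Phi_{W,n}(a)(h_{j}),(h_{i})\rangle=\norm{(I\otimes d)f(S)^{\ast}\xi}^{2}$, where $f(S)=\sum_{m}c_{m}S^{m}$ is the analytic Toeplitz operator built from the unilateral shift $S$ on $\ell^{2}(\mathbb{Z}_{\geq0})$. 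Because the symbol $f$ is scalar, $f(S)^{\ast}$ commutes with $I\otimes d$, and because $\norm{f(S)}=\norm{f}_{\infty}\leq1$ the second expression is dominated by the first; hence $\Phi_{W,n}(a)\leq a$. Since the shift model and the scalar symbol are unaffected by the ambient matrix amplification, the same estimate persists on each $M_{m}(M_{n}(B(H)))$, yielding complete domination.

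The sufficiency direction is the substantive one, and it is where absolute continuity is indispensable; I expect it to be the main obstacle. It is tempting to reduce to Theorem~\ref{thm:Classical-NP} by replacing each $Z_{i}$ with the strict contraction $rZ_{i}$, $r<1$, applying Proposition~\ref{pro:NP_vs_Lyapunov} to $\Phi_{rZ}=r^{2}\Phi_{Z}$, and letting $r\to1$, but this \emph{cannot} be carried out by order-theoretic scaling alone: passing from $\Phi_{Z}$ to $r^{2}\Phi_{Z}$ enlarges the cone of superharmonic elements (one must now control every $a$ with $r^{2}\Phi_{Z}(a)\leq a$, including those with $\Phi_{Z}(a)\not\leq a$), so complete domination need not descend to the scaled map; and since $\norm{\Phi_{Z}}=1$ there is no map of norm $<1$ lying above $\Phi_{Z}$ to exploit instead. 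The correct route is to build the interpolant on the dilation of the $Z_{i}$. Because the $Z_{i}$ are absolutely continuous, their minimal unitary dilations have absolutely continuous spectral measure, the dilation space is assembled from copies of $L^{2}(\mathbb{T})$, and multiplication by an $H^{\infty}(\mathbb{T})$ function restricts, through the Sz.-Nagy--Foia\c{s} calculus of \cite{Sz-NFBK2010}, to the function evaluated at $Z_{i}$. The telescoping representation then turns complete Lyapunov domination into the positivity of a densely defined Pick form on this dilation space, and this positivity supplies the contractive intertwining operator to which the commutant lifting argument underlying Theorem~\ref{thm:Classical-NP} and \cite[Theorem 5.3]{MS2004} applies, producing a contractive multiplier whose symbol $f$ satisfies $\norm{f}\leq1$ and $f(Z_{i})=W_{i}$. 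The crux is that $I-\Phi_{Z}$ is no longer invertible at the boundary, so the Pick form is only densely defined and the lifting must be performed there; mastering this is exactly what the absolute continuity, together with the structure theory for pure superharmonic elements from \cite{MS2010}, is meant to provide.
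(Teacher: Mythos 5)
Your necessity argument is essentially sound, and it is in substance a concrete, scalar rendering of the paper's converse: there, a pure superharmonic element for $\Phi_{Z,m}$ is factored as $(c_{i}c_{j}^{*})$ with each $c_{i}$ intertwining the unilateral shift of infinite multiplicity with $Z_{l(i)}$ (Theorem \ref{thm:factoring_superharm_ops}), and $c_{i}\pi^{\mathcal{F}(E)}(f)=W_{l(i)}c_{i}$ then gives $(W_{l(i)}c_{i}c_{j}^{*}W_{l(j)}^{*})=(c_{i}f(S)f(S)^{*}c_{j}^{*})\leq(c_{i}c_{j}^{*})$. Your operator $(I\otimes d)\,\xi$ is exactly such a $c^{*}$, so your computation proves the same inequality; you should still justify interchanging the limit defining $f(Z_{j})^{*}$ in the Sz.-Nagy--Foia\c{s} calculus with the sum over $k$ (e.g.\ by truncating or working with $f(rz)$ first), since for general $f\in H^{\infty}$ the series $\sum_{m}\overline{c_{m}}Z_{j}^{*m}$ does not converge in norm. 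Your diagnosis of why the $r\to1$ scaling fails is also correct. Note, however, that the paper does not argue in the classical setting at all: it disposes of Theorem \ref{thm:Classical_NP-revised} in one paragraph as the case $M=E=\mathbb{C}$ of Theorem \ref{thm:Absolute_NP_Thm}, whose proof in Section 3 carries all the content.

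The sufficiency direction is where your proposal stops short of a proof, and the gap is concrete. You assert that complete Lyapunov domination ``supplies the contractive intertwining operator to which the commutant lifting argument applies,'' but producing that operator is the entire difficulty, and the ``densely defined Pick form'' language does not by itself produce it: since $I-\Phi_{Z}$ is not invertible, you never exhibit the space on which the form lives or the operator it defines. The missing ingredient is the structure theorem for pure superharmonic elements (Theorem \ref{thm:factoring_superharm_ops}, imported from \cite{MS2010}): every pure superharmonic $a$ for $\Phi_{Z,m}$ has the form $(c_{i}c_{j}^{*})$ with $c_{i}(S\otimes I)=Z_{l(i)}c_{i}$. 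With it, complete Lyapunov domination becomes exactly the family of inequalities $(W_{l(i)}c_{i}c_{j}^{*}W_{l(j)}^{*})\leq(c_{i}c_{j}^{*})$ (Corollary \ref{cor:Alternate_Lyapunov_domination}), which is precisely the statement that $c^{*}h\mapsto c^{*}W_{i}^{*}h$ extends to a well-defined contraction $R$ on $\mathcal{M}=\overline{\mathrm{span}}\{c^{*}h\}$ inside the shift model (not the unitary-dilation $L^{2}(\mathbb{T})$ picture you invoke --- the construction lives on the isometric, $H^{2}$-type model); $\mathcal{M}$ is co-invariant for the commutant of the shift algebra, and commutant lifting yields $f(S)$ with $\Vert f\Vert_{\infty}\leq1$ and $cf(S)=W_{i}c$. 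Finally, the step you only gesture at --- passing from $f(Z_{i})c=W_{i}c$ to $f(Z_{i})=W_{i}$ --- requires the precise characterization of absolute continuity used in the paper (Theorem \ref{AbsCont}): $Z_{i}$ is absolutely continuous if and only if the ranges of the intertwiners $c$ span all of $H$. Without these two imported theorems your plan cannot be executed as written.
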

The technology we use to prove Theorem \ref{thm:Classical_NP-revised}
works in the more general context of Hardy algebras over $W^{*}$-correspondences,
as we mentioned earlier. This is the arena in which our analysis takes
place. But first, we must provide some background from \cite{MS2004,MS2010}.
We shall follow terminology and most of the notation from \cite{MS2010}.
In particular, we shall cite the second section of \cite{MS2010}
for further background because it gives a fairly detailed birds-eye
view of the theory as of 2010.

\specialsection*{Acknowledgment}

We are very grateful to Nir Cohen for introducing us to the Lyapunov
order.

\section{Background and the Main Theorem}

Throughout this note, $M$ will denote a fixed $W^{*}$-algebra. We
will treat $M$ as an abstract $C^{*}$-algebra that is a dual space
and we will not think of it as acting concretely on Hilbert space
except through representations that we will specify. Also, $E$ will
denote a $W^{*}$-correspondence over $M$. This means first that
$E$ is a right Hilbert $C^{*}$-module over $M$ which is self-dual.
Consequently, the algebra of all bounded adjointable $M$-module maps
on $E$, $\mathcal{L}(E)$, is all the bounded module maps and $\mathcal{L}(E)$
is a $W^{*}$-algebra. To say that $E$ is a $W^{*}$-correspondence
over $M$ means, then, that there is a normal representation $\varphi:M\to\mathcal{L}(E)$,
making $E$ a left $M$-module \cite[Paragraph 2.2]{MS2010}. To eliminate
technical digressions we assume that $\varphi$ is faithful and unital.
The tensor powers of $E$, $E^{\otimes n}$, will be the self-dual
completions of the usual $C^{*}$-Hilbert module tensor powers, and
the Fock space $\mathcal{F}(E)$ will be the self-dual completion
of the $C^{*}$-direct sum of the $E^{\otimes n}$. Then $\mathcal{F}(E)$
is a $W^{*}$-correspondence over $M$ and we denote by $\varphi_{\infty}$
the left action of $M$ on $\mathcal{F}(E)$ \cite[Paragraph 2.7]{MS2010}.
If $\xi\in E$, then $T_{\xi}$ will denote the creation operator
it determines: $T_{\xi}\eta:=\xi\otimes\eta$, $\eta\in\mathcal{F}(E)$.
The norm-closed subalgebra generated $\varphi_{\infty}(M)$ and $\{T_{\xi}\mid\xi\in E\}$
is called the \emph{tensor algebra} of $E$ and will be denoted by
$\mathcal{T}_{+}(E)$ \cite[Paragraph 2.7]{MS2010}. The ultra weak
closure of $\mathcal{T}_{+}(E)$ in $\mathcal{L}(\mathcal{F}(E))$
is called the \emph{Hardy algebra of} $E$ and is denoted $H^{\infty}(E)$
\cite[Definition 2.1]{MS2010}.

Suppose $\sigma:M\to B(H_{\sigma})$ is a normal representation and
let $\sigma^{E}:\mathcal{L}(E)\to B(E\otimes_{\sigma}H_{\sigma})$
be the induced representation of $\mathcal{L}(E)$ in the sense of
Rieffel \cite{mR74a,mR74b}: $\sigma^{E}(T):=T\otimes I$, $T\in\mathcal{L}(E)$.
We write $\mathcal{I}(\sigma^{E}\circ\varphi,\sigma)$ for the set
of all operators $C:E\otimes_{\sigma}H_{\sigma}\to H_{\sigma}$ that
satisfy the equation $C\sigma\circ\varphi(a)=\sigma(a)C$ for all
$a\in M$; i.e., $\mathcal{I}(\sigma^{E}\circ\varphi,\sigma)$ denotes
all the \emph{intertwiners} of $\sigma^{E}\circ\varphi$ and $\sigma$.
Also, we write $\mathbb{D}(E,\sigma)$ for the set of all elements
of $\mathcal{I}(\sigma^{E}\circ\varphi,\sigma)$ that have norm less
than $1$, and we write $\overline{\mathbb{D}(E,\sigma)}$ for its
norm closure. In \cite{MS98b} we proved
\begin{lem}
\label{lem:Representations}(See \cite[Paragraph 2.8]{MS2010}.) Given
$\mathfrak{z}\in\overline{\mathbb{D}(E,\sigma)}$ , define $\mathfrak{z}\times\sigma$
by $\mathfrak{z}\times\sigma(\varphi_{\infty}(a)):=\sigma(a)$ and
$\mathfrak{z}\times\sigma(T_{\xi})(h):=\mathfrak{z}(\xi\otimes h)$,
$a\in M$, $\xi\in E$, and $h\in H_{\sigma}$. Then $\mathfrak{z}\times\sigma$
extends to a completely contractive (c.c.) representation of $\mathcal{T}_{+}(E)$
on $H_{\sigma}$. Conversely, given a c.c. representation $\rho$
of $\mathcal{T}_{+}(E)$, then $\rho=\mathfrak{z}\times\sigma$, where
$\sigma:=\rho\circ\varphi_{\infty}$ and $\mathfrak{z}(\xi\otimes h):=\rho(T_{\xi})h$.
Further, for $F\in H^{\infty}(E)$, the $B(H_{\sigma})$-valued function
$\widehat{F}_{\sigma}$, defined on $\mathbb{D}(E,\sigma)$ by $\widehat{F}_{\sigma}(\mathfrak{z}):=\mathfrak{z}\times\sigma(F)$,
is bounded analytic and it extends to be continuous on $\overline{\mathbb{D}(E,\sigma)}$
when $F\in\mathcal{T}_{+}(E)$.\end{lem}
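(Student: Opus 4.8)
The plan is to establish the four assertions in turn. First I would give meaning to $\mathfrak{z}\times\sigma$ on the (non-closed) algebra generated by $\varphi_\infty(M)$ and $\{T_\xi:\xi\in E\}$. A monomial $T_{\xi_1}\cdots T_{\xi_n}$ is the generalized creation operator attached to $\zeta=\xi_1\otimes\cdots\otimes\xi_n\in E^{\otimes n}$, so I would introduce the iterated maps $\mathfrak{z}_n\colon E^{\otimes n}\otimes_\sigma H_\sigma\to H_\sigma$, $\mathfrak{z}_1=\mathfrak{z}$ and $\mathfrak{z}_n=\mathfrak{z}\circ(I_E\otimes\mathfrak{z}_{n-1})$ under the identification $E^{\otimes n}\otimes_\sigma H_\sigma\cong E\otimes_\varphi(E^{\otimes(n-1)}\otimes_\sigma H_\sigma)$ (the tensor $I_E\otimes\mathfrak{z}_{n-1}$ is legitimate precisely because $\mathfrak{z}$ intertwines the left actions), and set $(\mathfrak{z}\times\sigma)(T_\zeta)h=\mathfrak{z}_n(\zeta\otimes h)$. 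Since $\|\mathfrak{z}_n\|\le\|\mathfrak{z}\|^n$, the only point to verify here is that this assignment respects the bimodule relations $\varphi_\infty(a)T_\xi=T_{\varphi(a)\xi}$ and $T_\xi\varphi_\infty(a)=T_{\xi a}$, which is exactly where the intertwining identity $\mathfrak{z}(\varphi(a)\xi\otimes h)=\sigma(a)\mathfrak{z}(\xi\otimes h)$ is used; granting it, $\mathfrak{z}\times\sigma$ is a well-defined algebra homomorphism on the generated algebra.

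The substantive step is complete contractivity, and the strategy is to realise $\mathfrak{z}\times\sigma$ as a compression of the induced (Fock) representation $\iota_\sigma\colon X\mapsto X\otimes I_{H_\sigma}$ of $\mathcal{L}(\mathcal{F}(E))$, whose restriction to $\mathcal{T}_+(E)$ is manifestly completely contractive. Assuming first that $\|\mathfrak{z}\|<1$, so that the defect $D=(I_{H_\sigma}-\mathfrak{z}\mathfrak{z}^*)^{1/2}$ is available, I would build a Poisson-type kernel $V\colon H_\sigma\to\mathcal{F}(E)\otimes_\sigma H_\sigma$, with $n$-th Fock component assembled from $\mathfrak{z}_n^*$ and $D$, which is an isometry with coinvariant range (i.e.\ range invariant under every $\iota_\sigma(T_\xi)^*=T_\xi^*\otimes I$) and which intertwines $\sigma$ with $\varphi_\infty(\cdot)\otimes I$. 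Coinvariance of the range makes $x\mapsto V^*\iota_\sigma(x)V$ multiplicative, and a direct computation on the generators shows it agrees with $\mathfrak{z}\times\sigma$; hence $(\mathfrak{z}\times\sigma)(x)=V^*\iota_\sigma(x)V$ on the whole generated algebra, and since $\iota_\sigma$ is completely contractive and $V$ is an isometry, $\mathfrak{z}\times\sigma$ is completely contractive and so extends continuously to $\mathcal{T}_+(E)$. The boundary case $\|\mathfrak{z}\|=1$ is then obtained by applying this to the strict contractions $r\mathfrak{z}$ $(0<r<1)$ and letting $r\to 1$, since $(r\mathfrak{z}\times\sigma)(X)\to(\mathfrak{z}\times\sigma)(X)$ on generators and complete contractivity survives the limit. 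I expect the construction and verification of $V$ — that it is a well-defined isometry, that its range is coinvariant, and that the compression identity holds at every Fock level — to be the real obstacle; the rest is bookkeeping.

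For the converse, given a completely contractive representation $\rho$ of $\mathcal{T}_+(E)$, I would put $\sigma:=\rho\circ\varphi_\infty$, which is a representation of $M$ because $\rho$ restricted to the $C^*$-subalgebra $\varphi_\infty(M)$ is a unital completely contractive homomorphism and hence a $*$-homomorphism, and define $\mathfrak{z}(\xi\otimes h):=\rho(T_\xi)h$. The crux is that $\mathfrak{z}$ is a well-defined contraction, equivalently that $[\sigma(\langle\xi_i,\xi_j\rangle)]_{ij}\ge[\rho(T_{\xi_i})^*\rho(T_{\xi_j})]_{ij}$ for every finite family $(\xi_i)_{i=1}^k$. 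I would get this by dilating $\rho$ to a $*$-representation $\pi$ of the $C^*$-algebra generated by $\mathcal{T}_+(E)$ on some $K\supseteq H_\sigma$, with $\rho(\cdot)=P\pi(\cdot)|_{H_\sigma}$ for $P$ the projection onto $H_\sigma$; using the identity $T_\xi^*T_\eta=\varphi_\infty(\langle\xi,\eta\rangle)$, so that $\sigma(\langle\xi_i,\xi_j\rangle)=P\pi(T_{\xi_i})^*\pi(T_{\xi_j})|_{H_\sigma}$, the difference matrix becomes $[P\pi(T_{\xi_i})^*(I-P)\pi(T_{\xi_j})P]_{ij}$, which is of the form $C^*C$ and hence positive. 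The intertwining property of $\mathfrak{z}$ follows from $T_{\varphi(a)\xi}=\varphi_\infty(a)T_\xi$ together with multiplicativity of $\rho$, so $\mathfrak{z}\in\overline{\mathbb{D}(E,\sigma)}$; and $\rho$ and $\mathfrak{z}\times\sigma$ agree on the generators, hence everywhere by continuity.

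Finally, for the analyticity assertion I would note that on each monomial $\widehat{F}_\sigma(\mathfrak{z})=(\mathfrak{z}\times\sigma)(T_\zeta)=\mathfrak{z}_n(\zeta\otimes\cdot)$ is a homogeneous polynomial of degree $n$ in $\mathfrak{z}$; hence for $F$ in the generated algebra $\widehat{F}_\sigma$ is a polynomial in $\mathfrak{z}$, and for general $F\in H^\infty(E)$ one expands $F=\sum_n F_n$ into homogeneous parts and sums the corresponding polynomials. The uniform estimate $\|\widehat{F}_\sigma(\mathfrak{z})\|\le\|F\|$, which comes from the complete contractivity of $\mathfrak{z}\times\sigma$ established above, gives local uniform convergence and therefore bounded analyticity on $\mathbb{D}(E,\sigma)$. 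When $F\in\mathcal{T}_+(E)$, approximating $F$ in norm by polynomials $p_k$ yields $\|\widehat{F}_\sigma(\mathfrak{z})-\widehat{(p_k)}_\sigma(\mathfrak{z})\|\le\|F-p_k\|$ uniformly over $\overline{\mathbb{D}(E,\sigma)}$; since each $\widehat{(p_k)}_\sigma$ is continuous on the closed disc, so is the uniform limit $\widehat{F}_\sigma$.
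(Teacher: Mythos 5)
The paper does not actually prove this lemma --- it is quoted from \cite{MS98b} (see \cite[Paragraph 2.8]{MS2010}) --- so your proposal can only be measured against those sources, and against them it is correct in outline but takes a genuinely different route at the main step. In \cite{MS98b}, complete contractivity of $\mathfrak{z}\times\sigma$ is obtained by first dilating the covariant pair $(\mathfrak{z},\sigma)$ to an \emph{isometric} covariant representation via an explicit Sch\"affer-type block construction that works uniformly on the whole closed ball, and then observing that isometric covariant representations integrate to $C^{*}$-representations of the Toeplitz algebra; you instead build the Poisson kernel $V$ for $\norm{\mathfrak{z}}<1$ and reach the boundary by letting $r\to1$ in $r\mathfrak{z}$. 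Both work. Your route has the advantage of exhibiting $\mathfrak{z}\times\sigma$ as $V^{*}(\,\cdot\otimes I_{H_{\sigma}})V$ against the induced representation, which is ultraweakly continuous --- a fact you silently need anyway, since $\widehat{F}_{\sigma}(\mathfrak{z})$ for $F\in H^{\infty}(E)$ is only defined once one knows points of the open ball are absolutely continuous; the dilation route buys the closed ball with no limiting argument. Your converse (Arveson--Stinespring dilation of $\rho$ plus the identity $T_{\xi}^{*}T_{\eta}=\varphi_{\infty}(\langle\xi,\eta\rangle)$) is the standard one. Three items you dismiss as bookkeeping deserve explicit treatment: the isometry of $V$ rests on the telescoping identity $\mathfrak{z}_{n}(I_{E^{\otimes n}}\otimes(I-\mathfrak{z}\mathfrak{z}^{*}))\mathfrak{z}_{n}^{*}=\mathfrak{z}_{n}\mathfrak{z}_{n}^{*}-\mathfrak{z}_{n+1}\mathfrak{z}_{n+1}^{*}$ together with $\norm{\mathfrak{z}_{n}}\le\norm{\mathfrak{z}}^{n}\to0$, and the coinvariance of the range of $V$ really is the whole content of the forward direction, as you suspect; in the converse, normality of $\sigma=\rho\circ\varphi_{\infty}$ does not follow from complete contractivity and must be assumed (it is a standing hypothesis on covariant representations in this framework); and your term-by-term estimate in the analyticity argument needs the fact that the degree-$n$ Fourier component of $F$ has the form $T_{\zeta_{n}}$ with $\norm{\zeta_{n}}\le\norm{F}$, together with Ces\`aro summability of the Fourier series in the ultraweak topology to identify the sum of the series with $\widehat{F}_{\sigma}(\mathfrak{z})$.
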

\begin{rem}
\label{rem:Notation_change} We note here that our $\mathbb{D}(E,\sigma)$
is denoted $\mathbb{D}(E^{\sigma})^{*}$ in \cite[Paragraph 2.8]{MS2010},
where $E^{\sigma}:=\mathcal{I}(\sigma^{E}\circ\varphi,\sigma)^{*}=\mathcal{I}(\sigma,\sigma^{E}\circ\varphi)$
is the \emph{$\sigma$-dual of} $E$ \cite[Paragraph 2.6]{MS2010}.
This dual space plays an important role in our theory, as we shall
see, but we have opted for the change of notation in order to eliminate
numerous unnecessary and often confusing adjoints from our formulas. \end{rem}
\begin{defn}
\label{def:absolutely_continuous}A point $\mathfrak{z}\in\overline{\mathbb{D}(E,\sigma)}$
and the representation $\mathfrak{z}\times\sigma$ are called \emph{absolutely
continuous} in case $\mathfrak{z}\times\sigma$ extends to be an ultra
weakly continuous representation of $H^{\infty}(E)$ in $B(H_{\sigma})$.
We write $\mathcal{AC}(E,\sigma)$ for all the absolutely continuous
points of $\overline{\mathbb{D}(E,\sigma)}$.
\end{defn}
Our choice of terminology is inspired by the fact that when $M=E=\mathbb{C}$,
then $\mathfrak{z}$ is absolutely continuous in our sense if and
only if $\mathfrak{z}$, which is just an ordinary contraction operator
on $H_{\sigma}$, is absolutely continuous in the sense described
in the Introduction.

In general, $\mathbb{D}(E,\sigma)\subseteq\mathcal{AC}(E,\sigma)\subseteq\overline{\mathbb{D}(E,\sigma)}$,
and both inclusions are proper. If $M=E=\mathbb{C}$, and if $\sigma$
is the one-dimensional representation of $\mathbb{C}$ on $\mathbb{C}$,
then $\mathbb{D}(E,\sigma)$ is the open unit disc in the complex
plane and $\mathbb{D}(E,\sigma)=\mathcal{AC}(E,\sigma)$. In every
other setting of which we are aware, $\mathbb{D}(E,\sigma)\subsetneq\mathcal{AC}(E,\sigma)$.
Also, we know of no situation where $\overline{\mathbb{D}(E,\sigma)}=\mathcal{AC}(E,\sigma)$.
We have been able to identify $\mathcal{AC}(E,\sigma)$ explicitly
in numerous instances \cite[Sections 4 and 5]{MS2010} and we know
a lot about this space, but there is still much that remains mysterious.

The $\sigma$-dual of $E$, $E^{\sigma}:=\mathcal{I}(\sigma,\sigma^{E}\circ\varphi)$,
is important in this study for several reasons. The first is that
it is a $W^{*}$-correspondence over $\sigma(M)'$ in a very natural
way. For $\xi,\eta\in E^{\sigma}$, $\langle\xi,\eta\rangle$ is defined
to be $\xi^{*}\eta$ - the product being the ordinary operator product,
which makes sense as an operator on $H_{\sigma}$ since $\xi$ and
$\eta$ both map from $H_{\sigma}$ to $E\otimes_{\sigma}H_{\sigma}$.
The actions of $\sigma(M)'$ on $E^{\sigma}$ are given by the formula:\[
a\cdot\xi\cdot b:=(I_{E}\otimes a)\xi b,\quad a,b\in\sigma(M)',\,\xi\in E^{\sigma}.\]
Again, the products on the right hand side of the equation are ordinary
operator products. The concept of the $\sigma$- dual of a $W^{*}$-correspondence
was formalized in \cite{MS2004}, but it appeared, implicitly, in
a number of places. A key role that it will play here is in the identification
of the commutant of an induced representation, which we will describe
in the next section. But here we can already see its relevance for
the present considerations by virtue of the following observation:
Let $\mathfrak{z}_{1},\mathfrak{z}_{2},\ldots,\mathfrak{z}_{n}$ be
points in $\overline{\mathbb{D}(E,\sigma)}$. Then they define a map
$\Phi_{\mathfrak{z}}$ on the $n\times n$ matrices over $\sigma(M)'$
by the formula \begin{equation}
\Phi_{\mathfrak{z}}((a_{ij})):=\left(\langle\mathfrak{z}_{i},a_{ij}\cdot\mathfrak{z}_{j}\rangle\right)\quad(a_{ij})\in M_{n}(\sigma(M)').\label{eq:Phi_z}\end{equation}
A moment's reflection reveals that $\Phi_{\mathfrak{z}}$ is completely
positive, as it is the composition of manifestly completely positive
maps.

Our objective in this note is the proof of the following theorem,
which will occupy the next section.
\begin{thm}
\label{thm:Absolute_NP_Thm} Suppose $E$ is a $W^{*}$-correspondence
over a $W^{*}$-algebra $M$ and that $\sigma$ is a faithful normal
representation of $M$ on the Hilbert space $H_{\sigma}$. Suppose,
too, that $n$ distinct points $\mathfrak{z}_{1},\mathfrak{z_{2},}\ldots,\mathfrak{z}_{n}\in\mathcal{AC}(E,\sigma)$
are given and that $n$ operators in $B(H_{\sigma})$, $W_{1},W_{2},\cdots,W_{n}$,
are given. Define the map $\Phi_{\mathfrak{z}}$ on $M_{n}(\sigma(M)')$
by the formula $\Phi_{\mathfrak{z}}((a_{ij}))=(\langle\mathfrak{z}_{i},a_{ij}\cdot\mathfrak{z}_{j}\rangle)$
and define the map $\Phi_{W}$ on $M_{n}(B(H_{\sigma}))$ by the formula
$\Phi_{W}((T_{ij})):=\left(W_{i}T_{ij}W_{j}^{*}\right)$. Then there
is an element $F$ in $H^{\infty}(E)$, with $\Vert F\Vert\leq1$,
such that $\widehat{F}(\mathfrak{z}_{i})=W_{i}$, $i=1,2,\ldots,n$,
if and only if $\Phi_{W}$ completely dominates $\Phi_{\mathfrak{z}}$
in the sense of Lyapunov.
\end{thm}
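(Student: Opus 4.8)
The plan is to prove the two implications separately, leaning on the open--disc Nevanlinna--Pick theorem \cite[Theorem 5.3]{MS2004} together with Proposition~\ref{pro:NP_vs_Lyapunov} for necessity, and on the commutant--lifting/duality machinery of \cite{MS2004,MS2010} for sufficiency. Throughout I record the concrete form $\Phi_{\mathfrak{z}}((a_{ij}))=(\mathfrak{z}_i(I_E\otimes a_{ij})\mathfrak{z}_j^{*})$ of the point--evaluation map, whose iterates are implemented on the Fock levels by the composed intertwiners $\mathfrak{z}_{(k)}\colon E^{\otimes k}\otimes_\sigma H_\sigma\to H_\sigma$, so that $\Phi_{\mathfrak{z}}^{k}(\cdot)=\mathfrak{z}_{(k)}(I\otimes\cdot)\mathfrak{z}_{(k)}^{*}$. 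I also record the scaling identity $\Phi_{r\mathfrak{z}}=r^{2}\,\Phi_{\mathfrak{z}}$ for $0<r<1$: since $\Vert\mathfrak{z}_i\Vert\le 1$, the point $r\mathfrak{z}_i$ lies in the open disc $\mathbb{D}(E,\sigma)$ and $\Phi_{r\mathfrak{z}}$ has spectral radius at most $(r\max_i\Vert\mathfrak{z}_i\Vert)^{2}<1$, so $I-\Phi_{r\mathfrak{z}}$ is invertible with $(I-\Phi_{r\mathfrak{z}})^{-1}=\sum_k\Phi_{r\mathfrak{z}}^{k}$ completely positive. This ``Poisson'' picture is the common object through which the interpolation data and the Lyapunov order will be compared.

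\emph{Necessity.} Suppose $F\in H^\infty(E)$ with $\Vert F\Vert\le 1$ satisfies $\widehat{F}(\mathfrak{z}_i)=W_i$. As in the reduction used in Proposition~\ref{pro:NP_vs_Lyapunov}, it suffices to verify the inclusion of superharmonic sets at the first matricial level, so I take $a\ge 0$ in $M_n(\sigma(M)')$ with $\Phi_{\mathfrak{z}}(a)\le a$ and $\Phi_{\mathfrak{z}}^{k}(a)\searrow 0$. From $\Phi_{r\mathfrak{z}}=r^{2}\Phi_{\mathfrak{z}}$ one sees $\Phi_{r\mathfrak{z}}^{k}(a)=r^{2k}\Phi_{\mathfrak{z}}^{k}(a)\searrow 0$, so $a$ is automatically pure superharmonic for $\Phi_{r\mathfrak{z}}$ for every $r<1$. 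The same $F$ gives $\widehat{F}(r\mathfrak{z}_i)=:W_i^{(r)}$ with $\Vert W_i^{(r)}\Vert\le 1$, and since $r\mathfrak{z}_i\in\mathbb{D}(E,\sigma)$, \cite[Theorem 5.3]{MS2004} makes $(I-\Phi_{W^{(r)}})\circ(I-\Phi_{r\mathfrak{z}})^{-1}$ completely positive; Proposition~\ref{pro:NP_vs_Lyapunov} then yields $\Phi_{W^{(r)}}(a)\le a$. Letting $r\nearrow 1$ and using the absolute continuity of the points $\mathfrak{z}_i$ to guarantee $\widehat{F}(r\mathfrak{z}_i)\to W_i$ in the strong operator topology (as supplied by the analysis of $\mathcal{AC}(E,\sigma)$ in \cite{MS2010}), the congruences $(W_i^{(r)}a_{ij}W_j^{(r)*})$ converge weakly to $(W_i a_{ij}W_j^{*})=\Phi_W(a)$; since the positive cone is weakly closed, $\Phi_W(a)\le a$. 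Thus $\Phi_W$ completely dominates $\Phi_{\mathfrak{z}}$.

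\emph{Sufficiency.} Assume $\Phi_W$ completely dominates $\Phi_{\mathfrak{z}}$; this is the substantive direction. I would realize the interpolation problem inside the induced representation of $H^\infty(E)$ on $\mathcal{F}(E)\otimes_\sigma H_\sigma$, whose commutant is the induced image of $H^\infty(E^\sigma)$ by the duality theorem of \cite{MS2004}. Here the points $\mathfrak{z}_i$ determine coinvariant subspaces, the equations $\widehat{F}(\mathfrak{z}_i)=W_i$ become a single compressed intertwining relation, and solvability with $\Vert F\Vert\le 1$ is equivalent to the complete positivity of a reproducing kernel assembled from the $W_i$ and the Poisson partial sums $\sum_{k=0}^{N}\Phi_{\mathfrak{z}}^{k}(d)$. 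The task is then to deduce that kernel positivity from the hypothesis: every pure superharmonic element arises as a limit of such partial sums built from a defect $d=a-\Phi_{\mathfrak{z}}(a)\ge 0$, and the inequality $\Phi_W(a)\le a$ furnished by domination is precisely the estimate forcing the kernel to be completely positive. Commutant lifting then returns $F\in H^\infty(E)$ with $\Vert F\Vert\le 1$, and the absolute continuity of the $\mathfrak{z}_i$, through the ultra weakly continuous functional calculus of Definition~\ref{def:absolutely_continuous}, ensures $\widehat{F}(\mathfrak{z}_i)=W_i$.

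The main obstacle is exactly sufficiency at boundary points. When some $\mathfrak{z}_i$ lies in $\overline{\mathbb{D}(E,\sigma)}\setminus\mathbb{D}(E,\sigma)$ one has spectral radius $1$, so $I-\Phi_{\mathfrak{z}}$ need not be invertible and neither the Pick operator $(I-\Phi_W)\circ(I-\Phi_{\mathfrak{z}})^{-1}$ nor the geometric--series argument of Proposition~\ref{pro:NP_vs_Lyapunov} is available. Worse, the approximation $r\nearrow 1$ that drives necessity cannot be reversed: the family of pure superharmonic elements for $\Phi_{r\mathfrak{z}}=r^{2}\Phi_{\mathfrak{z}}$ strictly enlarges as $r$ decreases, so domination at $r=1$ does not transfer to the interior problems and one cannot simply solve open--disc instances and take a limit. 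The resolution I propose is to run the lifting argument directly through the pure superharmonic elements rather than through $(I-\Phi_{\mathfrak{z}})^{-1}$, controlling the Poisson partial sums uniformly by the Lyapunov domination hypothesis, and to invoke the description of $\mathcal{AC}(E,\sigma)$ from \cite{MS2010} both to keep the limiting kernel completely positive and to guarantee that the interpolant attains its prescribed values at the absolutely continuous boundary points.
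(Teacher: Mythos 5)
Your proposal does not close the substantive direction, and the gap is concrete: in the sufficiency argument you never identify the mechanism that converts the Lyapunov hypothesis into an operator inequality that commutant lifting can use. The paper's proof turns on Theorem \ref{thm:factoring_superharm_ops} (\cite[Theorem 4.6]{MS2010}): the pure superharmonic operators for $\Theta_{\mathfrak{z}}$ are \emph{exactly} the operators $cc^{*}$ with $c\in\mathcal{I}(\pi^{\mathcal{F}(E)},\mathfrak{z}\times\sigma)$, where $\pi^{\mathcal{F}(E)}$ is the \emph{universal} induced representation (built from a faithful $\pi$ of infinite multiplicity, not from $\sigma$). Via Corollary \ref{cor:Alternate_Lyapunov_domination} this rewrites complete Lyapunov domination as the family of matrix inequalities $(W_{l(i)}c_{i}c_{j}^{*}W_{l(j)}^{*})\leq(c_{i}c_{j}^{*})$, which by Lemma \ref{R} is precisely the statement that $U^{*}c^{*}h\mapsto U^{*}c^{*}W_{i}^{*}h$ is a well-defined contraction on the coinvariant subspace $\mathcal{M}$; commutant lifting plus Theorem \ref{thm:Commutant_Induced} then produces $F$, and Theorem \ref{AbsCont} (the span of the ranges of the intertwiners is all of $H_{\sigma}$ because the $\mathfrak{z}_{i}$ are absolutely continuous) upgrades $\widehat{F}(\mathfrak{z}_{i})c=W_{i}c$ to $\widehat{F}(\mathfrak{z}_{i})=W_{i}$. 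Your sketch gestures at ``a reproducing kernel assembled from the $W_{i}$ and the Poisson partial sums'' whose complete positivity is asserted to be equivalent to solvability and to follow from domination, but this kernel is never defined, the equivalence is not established, and the passage from $\Phi_{W}(a)\leq a$ to kernel positivity is exactly the missing factorization step. You also diagnose correctly that $(I-\Phi_{\mathfrak{z}})^{-1}$ is unavailable at the boundary, but the ``resolution'' you propose is a restatement of the difficulty rather than an argument.

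Your necessity direction is a genuinely different route from the paper's (the paper simply reverses the lifting computation: $\widehat{F}(\mathfrak{z}_{i})=W_{i}$ gives $c\,\pi^{\mathcal{F}(E)}(F)=W_{i}c$, hence the matrix inequalities of Lemma \ref{R}, hence domination via Corollary \ref{cor:Alternate_Lyapunov_domination}). The scaling identity $\Phi_{r\mathfrak{z}}=r^{2}\Phi_{\mathfrak{z}}$ and the reduction to the open-disc theorem are fine as far as they go, but the step you lean on --- that $\widehat{F}(r\mathfrak{z}_{i})\to W_{i}$ strongly as $r\nearrow1$ for every $F\in H^{\infty}(E)$ at an absolutely continuous point, ``as supplied by'' \cite{MS2010} --- is not a result you can simply cite: Lemma \ref{lem:Representations} gives continuity up to the boundary only for $F\in\mathcal{T}_{+}(E)$, and a radial-limit theorem for general $F\in H^{\infty}(E)$ at points of $\mathcal{AC}(E,\sigma)$ is not established in the background you invoke. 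Even granting it, this direction is the easy half; the theorem stands or falls on sufficiency, where your proposal is a plan rather than a proof.
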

\begin{proof}[Proof of Theorem \ref{thm:Classical_NP-revised}] That
theorem is an immediate consequence of Theorem \ref{thm:Absolute_NP_Thm}.
Indeed, in the setting of the former, $M=E=\mathbb{C}$, and $\sigma$
is just a multiple of the identity representation, the multiple being
the Hilbert space dimension of $H_{\sigma}$. Since we may safely
identify $\mathbb{C}\otimes_{\sigma}H_{\sigma}$ with $H_{\sigma}$,
$\overline{\mathbb{D}(E,\sigma)}$ may be identified with the closed
unit ball in $B(H_{\sigma})$, i.e., with all contractions on $H_{\sigma}$.
As we noted, the celebrated theorems of Sz.-Nagy and Foia\c{s} identify
$\mathcal{AC}(\mathbb{C},\sigma)$ with the absolutely continuous
contractions on $H_{\sigma}$ in the classical sense. When these identifications
are made, $\Phi_{\mathfrak{z}}$ of Theorem \ref{thm:Absolute_NP_Thm}
becomes the $\Phi_{Z}$ of Theorem \ref{thm:Classical_NP-revised}
and, of course, the two $\Phi_{W}$'s are the same. \end{proof}

\section{The Proof of Theorem \ref{thm:Absolute_NP_Thm}}

We will first show that if $\Phi_{W}$ completely dominates $\Phi_{\mathfrak{z}}$
in the sense of Lyapunov, then we can find an interpolating $F\in H^{\infty}(E)$
of norm at most $1$. The route we shall follow is similar, in certain
respects, to the route followed in the proof of \cite[Theorem 5.3]{MS2004}
and is based, ultimately, on the commutant lifting approach to the
classical Nevanlinna-Pick theorem pioneered by Sarason \cite{dS67}.
For this purpose, we need another way to express Lyapunov dominance
that reflects the fact that the $\mathfrak{z}_{i}$'s involved all
lie in $\mathcal{A}C(E,\sigma)$. The key tool in our approach is
the notion of an induced representation for $\mathcal{T}_{+}(E)$
and the connection such representations have with the concept of absolute
continuity. They are defined as follows: Let $\tau$ be a normal representation
of $M$ on the Hilbert space $H_{\tau}$. Then we may induce $\tau$
to $\mathcal{F}(E)$, obtaining a normal representation $\tau^{\mathcal{F}(E)}$
of $\mathcal{L}(\mathcal{F}(E))$ on the Hilbert space $\mathcal{F}(E)\otimes_{\tau}H_{\tau}$.
The restriction of $\tau^{\mathcal{F}(E)}$ to $\mathcal{T}_{+}(E)$,
then, is called the \emph{representation of} $\mathcal{T}_{+}(E)$
\emph{induced by} $\tau$. It is clearly an absolutely continuous
representation of $\mathcal{T}_{+}(E)$, since $H^{\infty}(E)$ is
contained in $\mathcal{L}(\mathcal{F}(E))$ by definition and $\tau^{\mathcal{F}(E)}$
is ultraweakly continuous. As we showed in \cite{MS2010}, and will
discuss in a moment, induced representations are the architypical
absolutely continuous representations. We continue to use the notation
$\tau^{\mathcal{F}(E)}$ for its restrictions to $\mathcal{T}_{+}(E)$
and $H^{\infty}(E)$.

In \cite{MS99} we develop at length the analogies between induced
representations of $\mathcal{T}_{+}(E)$ and $H^{\infty}(E)$ and
unilateral shifts. Indeed, a unilateral shift arises from an induced
representation of $\mathcal{T}_{+}(E)$, where $M=E=\mathbb{C}.$
\begin{defn}
Let $\pi$ be a faithful representation of $M$ on $H_{\pi}$ and
assume that $\pi$ has infinite multiplicity. Then $\pi^{\mathcal{F}(E)}$
is called \emph{the universal induced representation} of $\mathcal{T}_{+}(E)$
and $H^{\infty}(E)$ determined by $\pi$.

Any two faithful $\pi$'s with infinite multiplicity give unitarily
equivalent induced representations. Further, every induced representation
of $\mathcal{T}_{+}(E)$ is unitarily equivalent to a subrepresentation
of $\pi^{\mathcal{F}(E)}$ obtained by restricting $\pi^{\mathcal{F}(E)}$
to a subspace of the form $\mathcal{F}(E)\otimes_{\pi}\mathcal{K}$,
where $\mathcal{K}$ is a subspace of $H_{\pi}$ that reduces $\pi$
\cite[Paragraphs 2.5 and 2.11]{MS2010}. This explains the terminology,
allowing us to use the definite article. The representation $\pi$
and the induced representation $\pi^{\mathcal{F}(E)}$ will be fixed
for the remainder of this note.
\end{defn}
We also shall extend the notation $\mathcal{I}(\sigma^{E}\circ\varphi,\sigma)$,
and write $\mathcal{I}(\rho_{1},\rho_{2})$ for the set of operators
$C:H_{\rho_{1}}\to H_{\rho_{2}}$ that intertwine $\rho_{1}$ and
$\rho_{2}$, where $\rho_{1}$ and $\rho_{2}$ are any two completely
contractive representations of $\mathcal{T}_{+}(E)$. If $\rho_{i}$
is written as $\mathfrak{z}_{i}\times\sigma_{i}$, $i=1,2$, then
it is easy to see that an operator $C:H_{\rho_{1}}\to H_{\rho_{2}}$
lies in $\mathcal{I}(\rho_{1},\rho_{2})$ if and only if $C\in\mathcal{I}(\sigma_{1},\sigma_{2})$
and $\mathfrak{z}_{2}(I_{E}\otimes C)=C\mathfrak{z}_{1}$.

An important linkage among the universal induced representation, intertwiners,
and absolute continuity is the following theorem.
\begin{thm}
\cite[Theorem 4.7]{MS2010}\label{AbsCont} A point $\mathfrak{z}\in\overline{\mathbb{D}(E,\sigma)}$
is absolutely continuous if and only if \[
\bigvee\{\mbox{Ran}(c)\mid c\in\mathcal{I}(\pi^{\mathcal{F}(E)},\mathfrak{z}\times\sigma)\}=H_{\sigma}.\]
Further, for $F\in H^{\infty}(E)$, $\mathfrak{z}\in\mathcal{AC}(E,\sigma)$,
and $c\in\mathcal{I}(\pi^{\mathcal{F}(E)},\mathfrak{z}\times\sigma)$,
\begin{equation}
\widehat{F}(\mathfrak{z})c=c\pi^{\mathcal{F}(E)}(F).\label{eq:pt_evaluation}\end{equation}
\end{thm}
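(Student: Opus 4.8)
The plan is to treat the two assertions separately, starting with the point-evaluation identity \eqref{eq:pt_evaluation}, which is the easier half. For $F$ in the tensor algebra $\mathcal{T}_{+}(E)$ the identity $\widehat{F}(\mathfrak{z})c=c\pi^{\mathcal{F}(E)}(F)$ is nothing but the defining intertwining relation for $\mathcal{I}(\pi^{\mathcal{F}(E)},\mathfrak{z}\times\sigma)$, since $\widehat{F}(\mathfrak{z})=\mathfrak{z}\times\sigma(F)$ by Lemma \ref{lem:Representations}. To pass to a general $F\in H^{\infty}(E)$ when $\mathfrak{z}\in\mathcal{AC}(E,\sigma)$, I would observe that $F\mapsto\widehat{F}(\mathfrak{z})c$ and $F\mapsto c\pi^{\mathcal{F}(E)}(F)$ are both ultraweakly continuous on $H^{\infty}(E)$: the first because $\mathfrak{z}\times\sigma$ extends to an ultraweakly continuous representation by the very definition of absolute continuity and left multiplication by the fixed operator $c$ preserves ultraweak continuity, the second because $\pi^{\mathcal{F}(E)}$ is normal and right multiplication by $c$ is ultraweakly continuous. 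Since the two maps agree on the ultraweakly dense subalgebra $\mathcal{T}_{+}(E)$, they agree on all of $H^{\infty}(E)$, which gives \eqref{eq:pt_evaluation}.

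For the equivalence, write $\mathcal{M}:=\bigvee\{\mathrm{Ran}(c)\mid c\in\mathcal{I}(\pi^{\mathcal{F}(E)},\mathfrak{z}\times\sigma)\}$. The intertwining relation shows at once that each $\mathrm{Ran}(c)$ is invariant under $\mathfrak{z}\times\sigma(\mathcal{T}_{+}(E))$ and reduces $\sigma$, so the same holds for $\mathcal{M}$; dually, each $\ker c$ is invariant under $\pi^{\mathcal{F}(E)}(\mathcal{T}_{+}(E))$, and hence, by ultraweak density, under $\pi^{\mathcal{F}(E)}(H^{\infty}(E))$. To prove that $\mathcal{M}=H_{\sigma}$ implies absolute continuity, I would first use the infinite multiplicity of $\pi$ to amalgamate a family of intertwiners whose ranges are total in $H_{\sigma}$ into a single intertwiner $C$ with dense range (a direct sum of copies of $\pi^{\mathcal{F}(E)}$ is again $\pi^{\mathcal{F}(E)}$). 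Compressing $\pi^{\mathcal{F}(E)}$ to the coinvariant subspace $(\ker C)^{\perp}$ produces an ultraweakly continuous, hence absolutely continuous, completely contractive representation $\rho$, and the restriction of $C$ is then a quasi-affinity intertwining $\rho$ with $\mathfrak{z}\times\sigma$ over $\mathcal{T}_{+}(E)$. I would finish this direction by transporting the ultraweakly continuous functional calculus of $\rho$ across this quasi-affinity, i.e.\ by appealing to the invariance of absolute continuity under intertwining by a quasi-affinity.

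The reverse implication is where the real work lies, and it is the step I expect to be the main obstacle: one must show that absolute continuity of $\mathfrak{z}\times\sigma$ forces the intertwiner ranges to fill up $H_{\sigma}$. The cleanest route I see is through dilation theory. I would pass to the minimal isometric dilation of $\mathfrak{z}\times\sigma$, which is an isometric representation of $\mathcal{T}_{+}(E)$ on a larger Hilbert space, and apply the Wold-type decomposition that splits it into a summand unitarily equivalent to an induced representation and a fully coisometric summand. The induced summand yields, after amplification to match $\pi$, a supply of intertwiners with $\pi^{\mathcal{F}(E)}$ whose compressions back to $H_{\sigma}$ contribute to $\mathcal{M}$; the delicate point is the fully coisometric part, where absolute continuity must be used to realize it inside an induced representation as well---morally, absolute continuity is exactly the condition that excludes a ``singular'' summand carrying no intertwiners. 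Establishing that no such singular piece survives, so that the compressions of the dilation's induced structure exhaust $H_{\sigma}$, is the crux and the place where the machinery of \cite{MS2010} must be brought fully to bear.
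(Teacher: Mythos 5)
Your treatment of the point-evaluation identity \eqref{eq:pt_evaluation} is exactly the paper's: the identity on $\mathcal{T}_{+}(E)$ is the defining intertwining relation, and the extension to $H^{\infty}(E)$ follows because both sides are ultraweakly continuous in $F$ (the left by the definition of $\mathcal{AC}(E,\sigma)$, the right by normality of the induced representation) and agree on an ultraweakly dense subalgebra. The paper compresses this to ``check on generators,'' but the continuity argument you spell out is precisely what makes that sufficient, so this half is correct and matches.

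For the first assertion the comparison is different in kind: the paper does not prove it at all --- it is quoted verbatim as \cite[Theorem 4.7]{MS2010} --- whereas you attempt to reprove it. Your sketch of the ``if'' direction (amalgamating intertwiners into a single quasi-affinity using the infinite multiplicity of $\pi$, then transporting ultraweak continuity across it) is a reasonable outline, though transporting the extension of the functional calculus across a quasi-affinity needs more care than a one-line appeal to ``invariance of absolute continuity'': one must first \emph{define} the ultraweakly continuous extension of $\mathfrak{z}\times\sigma$ on all of $H^{\infty}(E)$, e.g.\ as a limit along bounded nets on the dense range of $C$, and verify it is multiplicative and independent of the net. Your ``only if'' direction is, as you yourself say, left open at the crux; and that crux is in fact the entire content of the cited theorem, which in \cite{MS2010} is established not by Wold-decomposing the minimal isometric dilation but by the machinery of the $\sigma$-dual correspondence and superharmonic operators (the same circle of ideas as Theorem \ref{thm:factoring_superharm_ops} here), together with a decomposition of $\overline{\mathbb{D}(E,\sigma)}$ into absolutely continuous and singular parts. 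So relative to the paper you have fully matched the part that is actually proved and left unproved the part that is cited; as a self-contained proof of the theorem, your proposal has a genuine gap exactly where you flagged it.
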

\begin{proof}
The first assertion is explicitly in \cite{MS2010} as Theorem 4.7.
The second assertion is easily checked on generators of $H^{\infty}(E)$
of the form $\varphi_{\infty}(a)$, $a\in M$, and $T_{\xi}$, $\xi\in E$.
That is all that is necessary to check.
\end{proof}
%
{}

Recall that if $\mathfrak{z}\in\overline{\mathbb{D}(E,\sigma)}$,
then $\mathfrak{z}^{*}$ lies in the $W^{*}$-correspondence $E^{\sigma}$
over $\sigma(M)'$. It therefore defines a completely positive map
$\Theta_{\mathfrak{z}}$ on $\sigma(M)'$ by the formula \[
\Theta_{\mathfrak{z}}(a)=\langle\mathfrak{z}^{*},a\cdot\mathfrak{z}^{*}\rangle=\mathfrak{z}(I_{E}\otimes a)\mathfrak{z}^{*},\quad a\in\sigma(M)'.\]
Indeed, $\Theta_{\mathfrak{z}}$ is just a special case of the map
$\Phi_{\mathfrak{z}}$ in the statement of Theorem \ref{thm:Absolute_NP_Thm}.
We are going to use the following theorem from \cite{MS2010} to obtain
an alternate formulation of the complete Lyapunov dominance assertion
in that theorem.
\begin{thm}
\cite[Theorem 4.6]{MS2010}\label{thm:factoring_superharm_ops} If
$\mathfrak{z}\in\overline{\mathbb{D}(E,\sigma)}$, then an operator
$q\in\sigma(M)'$ is a pure superharmonic operator for $\Theta_{\mathfrak{z}}$
if and only if $q$ can be written as $q=cc^{*}$ for an element $c\in\mathcal{I}(\pi^{\mathcal{F}(E)},\mathfrak{z}\times\sigma)$. \end{thm}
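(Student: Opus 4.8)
The plan is to prove the two implications separately: the direction $q=cc^{*}\Rightarrow q$ pure superharmonic is a short computation, while the converse carries the content. Throughout I write $\mathfrak{z}_{n}:E^{\otimes n}\otimes_{\sigma}H_{\sigma}\to H_{\sigma}$ for the iterated contraction defined by $\mathfrak{z}_{1}=\mathfrak{z}$ and $\mathfrak{z}_{n+1}=\mathfrak{z}(I_{E}\otimes\mathfrak{z}_{n})$, so that a routine induction gives $\Theta_{\mathfrak{z}}^{n}(a)=\mathfrak{z}_{n}(I_{E^{\otimes n}}\otimes a)\mathfrak{z}_{n}^{*}$ and $\mathfrak{z}_{n+1}^{*}=(I_{E}\otimes\mathfrak{z}_{n}^{*})\mathfrak{z}^{*}$. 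I write $s$ for the creation shift on $\mathcal{F}(E)\otimes_{\pi}H_{\pi}$, so that, by the intertwiner description recorded before Theorem \ref{AbsCont}, an operator $c$ lies in $\mathcal{I}(\pi^{\mathcal{F}(E)},\mathfrak{z}\times\sigma)$ exactly when $c\in\mathcal{I}(\pi^{\mathcal{F}(E)}\circ\varphi_{\infty},\sigma)$ as $M$-representations and $\mathfrak{z}(I_{E}\otimes c)=cs$.

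For the forward implication, suppose $c\in\mathcal{I}(\pi^{\mathcal{F}(E)},\mathfrak{z}\times\sigma)$ and put $q=cc^{*}$. Taking adjoints in the covariance relation shows $q\in\sigma(M)'$, and the identity $\mathfrak{z}(I_{E}\otimes c)=cs$ yields, by induction, $\mathfrak{z}_{n}(I_{E^{\otimes n}}\otimes c)=cs^{n}$. Hence $\Theta_{\mathfrak{z}}^{n}(q)=\mathfrak{z}_{n}(I_{E^{\otimes n}}\otimes cc^{*})\mathfrak{z}_{n}^{*}=cs^{n}(s^{*})^{n}c^{*}$. Since $s$ is an isometry, $s^{n}(s^{*})^{n}$ is the projection onto $\bigoplus_{k\geq n}E^{\otimes k}\otimes_{\pi}H_{\pi}$; these projections decrease strongly to $0$, so $\Theta_{\mathfrak{z}}^{n}(q)\searrow0$, and in particular $\Theta_{\mathfrak{z}}(q)=css^{*}c^{*}\leq cc^{*}=q$. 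Thus $q$ is pure superharmonic.

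For the converse, let $q$ be pure superharmonic and set $D:=q-\Theta_{\mathfrak{z}}(q)\geq0$, an element of $\sigma(M)'$. Telescoping $\Theta_{\mathfrak{z}}^{n}(D)=\Theta_{\mathfrak{z}}^{n}(q)-\Theta_{\mathfrak{z}}^{n+1}(q)$ and invoking $\Theta_{\mathfrak{z}}^{n}(q)\searrow0$ gives the Poisson-type expansion $q=\sum_{n\geq0}\Theta_{\mathfrak{z}}^{n}(D)$, with strong convergence. I would realize this sum as $cc^{*}$ by a Poisson kernel. Writing $D^{1/2}\in\sigma(M)'$ and letting $j$ be an $M$-module isometry embedding the representation of $M$ on $\overline{\mathrm{ran}}\,D^{1/2}\subseteq H_{\sigma}$ into $H_{\pi}$ as a reducing subspace — available precisely because $\pi$ is faithful of infinite multiplicity — I define $c^{*}:H_{\sigma}\to\mathcal{F}(E)\otimes_{\pi}H_{\pi}$ componentwise by $(c^{*}h)_{n}=(I_{E^{\otimes n}}\otimes j)(I_{E^{\otimes n}}\otimes D^{1/2})\mathfrak{z}_{n}^{*}h$. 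The identity $\Vert(c^{*}h)_{n}\Vert^{2}=\langle\Theta_{\mathfrak{z}}^{n}(D)h,h\rangle$ together with the expansion gives $\Vert c^{*}h\Vert^{2}=\langle qh,h\rangle$, so $c^{*}$ is bounded and $cc^{*}=q$.

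It remains to verify $c\in\mathcal{I}(\pi^{\mathcal{F}(E)},\mathfrak{z}\times\sigma)$. The $M$-covariance $c^{*}\sigma(a)=\pi^{\mathcal{F}(E)}(\varphi_{\infty}(a))c^{*}$ follows from $\mathfrak{z}_{n}^{*}\sigma(a)=(\varphi_{n}(a)\otimes I)\mathfrak{z}_{n}^{*}$ (the adjoint of the intertwining property of $\mathfrak{z}$), together with $D^{1/2}\in\sigma(M)'$ and the fact that $j$ is a module map. The shift condition, in the adjoint form $s^{*}c^{*}=(I_{E}\otimes c^{*})\mathfrak{z}^{*}$, falls out of the recursion $\mathfrak{z}_{n+1}^{*}=(I_{E}\otimes\mathfrak{z}_{n}^{*})\mathfrak{z}^{*}$: the level-$m$ component of $(I_{E}\otimes c^{*})\mathfrak{z}^{*}$ is $(I_{E}\otimes(I\otimes jD^{1/2})\mathfrak{z}_{m}^{*})\mathfrak{z}^{*}$, which is exactly the level-$(m+1)$ component $(I\otimes jD^{1/2})\mathfrak{z}_{m+1}^{*}$ of $c^{*}$ carried back by $s^{*}$, while the level-$0$ term $jD^{1/2}$ is annihilated by $s^{*}$. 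I expect the main obstacle to be twofold: first, securing the module embedding $j$ into the universal space $H_{\pi}$, which is where the infinite-multiplicity hypothesis on $\pi$ is essential and where one must confirm that $\overline{\mathrm{ran}}\,D^{1/2}$ carries a normal representation of $M$ that genuinely embeds into $\pi$; and second, the careful bookkeeping of the identifications $E^{\otimes(n+1)}\otimes_{\pi}H_{\pi}\cong E\otimes_{\tilde{\pi}}(E^{\otimes n}\otimes_{\pi}H_{\pi})$ needed to make the recursive matching with $s^{*}$ rigorous. The positivity and convergence estimates are routine once these identifications are fixed.
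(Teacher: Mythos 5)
The paper does not reprove this statement; it imports it verbatim as \cite[Theorem 4.6]{MS2010}, and your argument is a correct reconstruction along the same lines as the cited source: the easy direction via $\Theta_{\mathfrak{z}}^{n}(cc^{*})=cs_{n}s_{n}^{*}c^{*}$ with $s_{n}s_{n}^{*}\searrow0$, and the converse via the Poisson-kernel operator built from $D^{1/2}=(q-\Theta_{\mathfrak{z}}(q))^{1/2}$ and the telescoping identity $q=\sum_{n\ge0}\Theta_{\mathfrak{z}}^{n}(D)$. The two caveats you flag yourself --- that embedding $\sigma$ restricted to $\overline{\mathrm{ran}}\,D^{1/2}$ into $\pi$ uses faithfulness and sufficiently large multiplicity of $\pi$, and that the identifications $E^{\otimes(n+1)}\otimes_{\pi}H_{\pi}\cong E\otimes(E^{\otimes n}\otimes_{\pi}H_{\pi})$ must be fixed once and for all --- are exactly the points the cited proof also has to address, and neither is a gap.
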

\begin{cor}
\label{cor:Alternate_Lyapunov_domination}We adopt the notation of
Theorem \ref{thm:Absolute_NP_Thm}. The map $\Phi_{W}$ completely
dominates $\Phi_{\mathfrak{z}}$ in the sense of Lyapunov if and only
if the following condition is satisfied: For every integer $m\geq1$,
for every choice of function $l:\{1,2,\ldots,m\}\to\{1,2,\ldots,n\},$
and for any choice of $m$ operators $c_{j}\in\mathcal{I}(\pi^{\mathcal{F}(E)},\mathfrak{z}_{l(j)}\times\sigma)$
the operator matrix inequality\begin{equation}
(W_{l(i)}c_{i}c_{j}^{*}W_{l(j)}^{*})_{i,j=1}^{m}\leq(c_{i}c_{j}^{*})_{i,j=1}^{m}\label{eq:NPCond1}\end{equation}
is satisfied.\end{cor}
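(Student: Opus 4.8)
The plan is to recognize both maps and all of their matrix amplifications as $\Theta$-maps of a single, large ``diagonal'' intertwiner, so that Theorem \ref{thm:factoring_superharm_ops} can be applied once and for all. Fix an integer $k\geq1$ and set $N=nk$. Let $\sigma^{(N)}$ denote the $N$-fold inflation of $\sigma$, acting on $H_\sigma^{(N)}=\bigoplus_{(i,p)}H_\sigma$ with the index $(i,p)$ running over $\{1,\dots,n\}\times\{1,\dots,k\}$; then $\sigma^{(N)}(M)'=M_N(\sigma(M)')$, which I identify with $M_k(M_n(\sigma(M)'))$. First I would introduce the diagonal intertwiner $\mathfrak{Z}_k\in\overline{\mathbb{D}(E,\sigma^{(N)})}$ whose $(i,p)$-block is $\mathfrak{z}_i$ (so $\mathfrak{z}_i$ is repeated $k$ times as $p$ varies). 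Because each $\mathfrak{z}_i$ is an intertwiner of norm at most $1$ and $\mathfrak{Z}_k$ does not mix the summands, $\mathfrak{Z}_k$ is itself an intertwiner of norm at most $1$. A short computation with the formula $\Theta_{\mathfrak{Z}_k}(A)=\mathfrak{Z}_k(I_E\otimes A)\mathfrak{Z}_k^*$ then shows that $\Theta_{\mathfrak{Z}_k}$, read through the identification $\sigma^{(N)}(M)'=M_k(M_n(\sigma(M)'))$, is exactly the promotion $(\Phi_{\mathfrak{z}})_k$; likewise $(\Phi_W)_k$ sends $B$ to the matrix whose $(i,p),(j,q)$ entry is $W_iB_{(i,p),(j,q)}W_j^*$.

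The key step is then to apply Theorem \ref{thm:factoring_superharm_ops} to $\mathfrak{Z}_k$: the pure superharmonic operators of $M_k(M_n(\sigma(M)'))$ for $(\Phi_{\mathfrak{z}})_k=\Theta_{\mathfrak{Z}_k}$ are precisely the operators $CC^*$ with $C\in\mathcal{I}(\pi^{\mathcal{F}(E)},\mathfrak{Z}_k\times\sigma^{(N)})$. Since $\mathfrak{Z}_k\times\sigma^{(N)}=\bigoplus_{(i,p)}(\mathfrak{z}_i\times\sigma)$, an operator $C$ lies in this intertwiner space if and only if each component $C_{(i,p)}$ lies in $\mathcal{I}(\pi^{\mathcal{F}(E)},\mathfrak{z}_i\times\sigma)$, and then $CC^*$ has $(i,p),(j,q)$ entry $C_{(i,p)}C_{(j,q)}^*$. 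Feeding this into the definition of superharmonicity for $(\Phi_W)_k$, the inequality $(\Phi_W)_k(CC^*)\le CC^*$ becomes exactly $(W_iC_{(i,p)}C_{(j,q)}^*W_j^*)\le(C_{(i,p)}C_{(j,q)}^*)$, which is precisely an instance of \eqref{eq:NPCond1} with $m=N$, with $l(i,p)=i$, and with $c_{(i,p)}=C_{(i,p)}$.

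With this dictionary in hand the two implications are short. If the matrix condition of the Corollary holds for all $m$ and all $l$, then for each $k$ it holds in particular for the structured choice above, so every pure superharmonic $CC^*$ is superharmonic for $(\Phi_W)_k$; as $k$ was arbitrary, $\Phi_W$ completely dominates $\Phi_{\mathfrak{z}}$. Conversely, assume complete Lyapunov domination, and let $m$, a map $l\colon\{1,\dots,m\}\to\{1,\dots,n\}$, and operators $c_j\in\mathcal{I}(\pi^{\mathcal{F}(E)},\mathfrak{z}_{l(j)}\times\sigma)$ be given. The one point that needs care is that the index set of a promotion $(\cdot)_k$ has the rigid form $\{1,\dots,n\}\times\{1,\dots,k\}$, in which each value $i$ is repeated exactly $k$ times, whereas $l$ is arbitrary. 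I would resolve this by taking $k=m$, choosing an injection $j\mapsto(l(j),p_j)$ into $\{1,\dots,n\}\times\{1,\dots,m\}$, and defining $C$ by placing $c_j$ in the slot $(l(j),p_j)$ and $0$ elsewhere (zero is trivially an intertwiner). Then $C\in\mathcal{I}(\pi^{\mathcal{F}(E)},\mathfrak{Z}_m\times\sigma^{(nm)})$, so $CC^*$ is pure superharmonic for $(\Phi_{\mathfrak{z}})_m$ and hence, by hypothesis, superharmonic for $(\Phi_W)_m$. Compressing the resulting $nm\times nm$ inequality to the principal submatrix indexed by $\{(l(j),p_j):1\le j\le m\}$ yields exactly \eqref{eq:NPCond1}, since compression to a principal submatrix preserves operator inequalities. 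The main obstacle is thus not analytic but organizational: correctly packaging $(\Phi_{\mathfrak{z}})_k$ as a single $\Theta_{\mathfrak{Z}_k}$ so that Theorem \ref{thm:factoring_superharm_ops} becomes applicable, and then matching the rigid block-indexing of the amplifications with the free indexing $(m,l)$ of \eqref{eq:NPCond1} through this padding-and-compression device.
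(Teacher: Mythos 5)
Your proposal is correct and follows essentially the same route as the paper: both arguments package the points into a diagonal intertwiner over an inflation of $\sigma$ so that Theorem \ref{thm:factoring_superharm_ops} identifies the pure superharmonic elements as the matrices $(c_ic_j^*)$, after which the equivalence with \eqref{eq:NPCond1} is a direct comparison. The only difference is organizational --- the paper applies the theorem to the $l$-indexed diagonal $\tilde{\mathfrak{z}}$ and leaves the passage to and from the promotions $(\Phi_{\mathfrak{z}})_k$ as a ``judicious choice,'' whereas you work with the promotion-shaped diagonal and make that passage explicit via the zero-padding and principal-submatrix compression, which is a legitimate filling-in of the same step.
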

\begin{proof}
Fix $m$ and a function $l:\{1,2,\ldots,m\}\to\{1,2,\ldots,n\}$.
Write $H_{\sigma}^{(m)}$ for the direct sum of $m$ copies of $H_{\sigma}$
and let $\sigma_{m}$ be the inflated representation of $M$ on $H_{\sigma}^{(m)}$,
i.e., $\sigma_{m}(a):=\mbox{diag}\{a,a,\cdots,a\}$. Then $\sigma_{m}(M)'=M_{m}(\sigma(M)')$.
Also, write $E^{(m)}$ for the direct sum of copies of $E$, which
is also a $W^{*}$-correspondence over $M$ in the obvious way, and
set $\tilde{\mathfrak{z}}:=(\mathfrak{z}_{l(1)},\mathfrak{z}_{l(2)},\cdots,\mathfrak{z}_{l(m)})$.
Then we may view $\tilde{\mathfrak{z}}$ as a map from $E^{(m)}\otimes_{\sigma_{m}}H_{\sigma}^{(m)}=(E\otimes_{\sigma}H_{\sigma})^{(m)}$
to $H_{\sigma}^{(m)}$, which clearly belongs to $\mathcal{I}(\sigma_{m}^{E^{(m)}}\circ\varphi,\sigma_{m})$.
Consequently, $\tilde{\mathfrak{z}}$ defines a completely positive
map $\Theta_{\tilde{\mathfrak{z}}}$ on $\sigma_{m}(M)'=M_{m}(\sigma(M)')$,
and it is easy to see that \[
\Theta_{\tilde{\mathfrak{z}}}((b_{l(i),l(j)}))_{i,j=1}^{m}=\left(\mathfrak{z}_{l(i)}(I_{E}\otimes b_{l(i),l(j)})\mathfrak{z}_{l(j)}^{*}\right),\quad(b_{l(i),l(j)})_{i,j=1}^{m}\in M_{m}(\sigma(M)').\]
Moreover, by Theorem \ref{thm:factoring_superharm_ops}, the pure
superharmonic elements $M_{m}(\sigma(M)')$ for $\Theta_{\tilde{\mathfrak{z}}}$
are of the form $(c_{i}c_{j}^{*})_{i,j=1}^{m}$, where $c_{i}\in\mathcal{I}(\pi^{\mathcal{F}(E)},(\mathfrak{z}_{l(i)}\times\sigma))$.
On the other hand, the $W_{i}$'s may be used to define the completely
positive map $\Psi_{W,l}$ on $M_{m}(\sigma(M)')$ by the formula\[
\Psi_{W,l}((b_{l(i),l(j)})):=\left(W_{l(i)}b_{l(i),l(j)}W_{l(j)}^{*}\right),\quad(b_{l(i),l(j)})_{i,j=1}^{m}\in M_{m}(\sigma(M)').\]
The inequality \ref{eq:NPCond1} is the statement that the condition
of the corollary is equivalent to the assertion that $\Psi_{W,l}$
dominates $\Theta_{\tilde{\mathfrak{z}}}$ in the sense of Lyapunov
for each choice of $m$ and $l$. It is now evident that the condition
of the corollary implies that $\Phi_{W}$ \emph{completely} dominates
$\Phi_{\mathfrak{z}}$ in the sense of Lyapunov by choosing $m$ and
$l$ judiciously. On the other hand, if $\Phi_{W}$ completely dominates
$\Phi_{\mathfrak{z}}$, then given $m$ and $l$, one can clearly
choose a $k$ so that the domination of $(\Phi_{\mathfrak{z}})_{k}$by
$(\Phi_{W})_{k}$ in the sense of Lyapunov gives the desired inequalities
of the condition for that $m$ and $l$.
\end{proof}
In order to follow the commutant lifting approach pioneered by Sarason,
we require the description of the commutant of $\pi^{\mathcal{F}(E)}(H^{\infty}(E))$
that we developed in \cite{MS2004}. The description there works for
any induced representation, but we formulate it here specifically for
$\pi^{\mathcal{F}(E)}$.
\begin{thm}
\cite[Theorem 3.9]{MS2004}\label{thm:Commutant_Induced} Write $\iota$
for the identity representation of $\pi(M)'$ on $H_{\pi}$, and let
$\tau$ be the induced representation of $\mathcal{L}(E^{\pi})$ acting
on $\mathcal{F}(E^{\pi})\otimes_{\iota}H_{\pi}$, i.e., let $\tau=\iota^{\mathcal{F}(E^{\pi})}$.
Then the map $U:\mathcal{F}(E^{\pi})\otimes_{\iota}H_{\pi}\to\mathcal{F}(E)\otimes_{\pi}H_{\pi}$,
defined by the formula \begin{equation}
U(\xi_{1}\otimes\xi_{2}\otimes\cdots\otimes\xi_{n}\otimes h):=(I_{E^{\otimes(n-1)}}\otimes\xi_{1})(I_{E^{\otimes(n-2)}}\otimes\xi_{2})\cdots(I_{E}\otimes\xi_{n-1})\xi_{n}(h),\label{eq:Def_of_U}\end{equation}
$\xi_{1}\otimes\xi_{2}\otimes\cdots\otimes\xi_{n}\otimes h\in(E^{\pi})^{\otimes n}\otimes_{\iota}H_{\pi}$,
is a Hilbert space isomorphism and \[
U\tau(H^{\infty}(E^{\pi}))U^{*}=\pi^{\mathcal{F}(E)}(H^{\infty}(E))'.\]
Likewise,
$U^{*}\pi^{\mathcal{F}(E)}(H^{\infty}(E))U=\tau(H^{\infty}(E^{\pi}))'$,
and the double commutant relations hold:\[
\pi^{\mathcal{F}(E)}(H^{\infty}(E))''=\pi^{\mathcal{F}(E)}(H^{\infty}(E)),\qquad\mbox{and}\qquad\tau(H^{\infty}(E^{\pi}))''=\tau(H^{\infty}(E^{\pi})).\]

\end{thm}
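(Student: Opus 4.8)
The plan is to establish the theorem in three stages: first that $U$ is a Hilbert space isomorphism, then that conjugation by $U$ carries $\tau(H^{\infty}(E^{\pi}))$ exactly onto $\pi^{\mathcal{F}(E)}(H^{\infty}(E))'$, and finally that the remaining identity $U^{*}\pi^{\mathcal{F}(E)}(H^{\infty}(E))U=\tau(H^{\infty}(E^{\pi}))'$ and both double commutant relations follow formally from a single biduality observation. Throughout I would replace $H^{\infty}(E)$ by the norm-closed tensor algebra $\mathcal{T}_{+}(E)$ when computing commutants, since the commutant of an algebra coincides with that of its ultraweak closure, so it is enough to work with the generators $\varphi_{\infty}(a)$ and $T_{\xi}$.

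For the unitarity of $U$, the natural starting point is the level-one map $W:E^{\pi}\otimes_{\iota}H_{\pi}\to E\otimes_{\pi}H_{\pi}$, $W(\xi\otimes h):=\xi(h)$, which is the source of the displayed formula for $U$. Because each $\xi\in E^{\pi}$ is by definition an intertwiner $\xi:H_{\pi}\to E\otimes_{\pi}H_{\pi}$, the map $W$ respects the $\pi(M)'$-balancing, and the computation $\langle W(\xi\otimes h),W(\eta\otimes k)\rangle=\langle h,\xi^{*}\eta\,k\rangle=\langle h,\langle\xi,\eta\rangle k\rangle$ shows it is isometric. Its surjectivity is exactly the point where the infinite multiplicity of $\pi$ enters, as that hypothesis guarantees the ranges of elements of $E^{\pi}$ span $E\otimes_{\pi}H_{\pi}$. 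I would then obtain $U$ by iterating $W$ one tensor slot at a time: an induction on $n$, matching the iterated correspondence inner product on $(E^{\pi})^{\otimes n}$ against the operator products in the definition of $U$, shows that $U$ restricts to a unitary of $(E^{\pi})^{\otimes n}\otimes_{\iota}H_{\pi}$ onto $E^{\otimes n}\otimes_{\pi}H_{\pi}$, and summing over $n$ yields the unitary $U$.

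The commutant identity splits into a routine inclusion and a hard one. For $U\tau(H^{\infty}(E^{\pi}))U^{*}\subseteq\pi^{\mathcal{F}(E)}(H^{\infty}(E))'$ it suffices to check the generators of $\tau(\mathcal{T}_{+}(E^{\pi}))$: the diagonal operators coming from $b\in\pi(M)'$ and the creation operators $T_{\zeta}$, $\zeta\in E^{\pi}$. Tracing these through $U$ one finds that the former become $I_{\mathcal{F}(E)}\otimes b$, which commute with $\varphi_{\infty}(a)\otimes I$ and with $T_{\xi}\otimes I$ precisely because $b\in\pi(M)'$, while $U\tau(T_{\zeta})U^{*}$ is a ``right'' creation operator attached to $\zeta$ that commutes with every ``left'' creation operator $T_{\xi}\otimes I$; this is the correspondence analogue of the elementary fact that left and right creation operators on Fock space commute. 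The reverse inclusion is the crux of the theorem and the step I expect to be the main obstacle. Here I would take $X\in\pi^{\mathcal{F}(E)}(H^{\infty}(E))'$ and recover it from its symbol $X(M\otimes_{\pi}H_{\pi})$ on the degree-zero vacuum subspace, using that $\pi^{\mathcal{F}(E)}(\mathcal{T}_{+}(E))$ applied to the vacuum is dense, so that a commutant element with vanishing symbol is zero. To control convergence and place the symbol in $\tau(H^{\infty}(E^{\pi}))$, I would bring in the gauge group, namely the circle action scaling $E^{\otimes k}$ by $e^{ikt}$ and implemented by unitaries $U_{t}$. Averaging $U_{t}XU_{t}^{*}$ against $e^{-ikt}$ decomposes $X=\sum_{k}X_{k}$ into homogeneous components, each again in the commutant; the determination-by-vacuum argument forces $X_{k}=0$ for $k<0$ (analyticity), exhibits each $X_{k}$, $k\geq0$, as a degree-$k$ right-creation operator, and lets me reassemble $X$ from its Fej\'er means. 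The uniform bound on these means, inherited from $\Vert X\Vert$, is precisely what certifies that the symbol defines a bounded element of $H^{\infty}(E^{\pi})$.

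Finally, I would deduce the symmetric identity and the double commutant relations without repeating the argument, by exploiting the biduality $(E^{\pi})^{\iota}\cong E$ of $W^{*}$-correspondences. Applying the commutant computation just proved, but to the correspondence $E^{\pi}$ over $\pi(M)'$ with the representation $\iota$, identifies $\tau(H^{\infty}(E^{\pi}))'$ with the image of $H^{\infty}((E^{\pi})^{\iota})=H^{\infty}(E)$ under the associated induced representation, which under the corresponding isomorphism is $U^{*}\pi^{\mathcal{F}(E)}(H^{\infty}(E))U$. Taking commutants of the two resulting equalities and using $UU^{*}=I=U^{*}U$ then yields $\pi^{\mathcal{F}(E)}(H^{\infty}(E))''=\pi^{\mathcal{F}(E)}(H^{\infty}(E))$ and $\tau(H^{\infty}(E^{\pi}))''=\tau(H^{\infty}(E^{\pi}))$ simultaneously.
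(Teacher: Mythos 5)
The paper does not prove this theorem; it is imported verbatim from \cite[Theorem 3.9]{MS2004}, so there is no in-paper argument to compare against. Measured against the proof in that source, your outline follows essentially the same architecture: the unitary $U$ is built by iterating the level-one evaluation $\xi\otimes h\mapsto\xi(h)$, whose isometry is the inner-product identity $\langle\xi,\eta\rangle=\xi^{*}\eta$ and whose surjectivity is exactly where faithfulness and infinite multiplicity of $\pi$ enter; the easy inclusion is checked on the generators $\varphi_{\infty}(b)$ and $T_{\zeta}$, which transport to $I_{\mathcal{F}(E)}\otimes b$ and to right creation operators; the reverse inclusion is handled by gauge-group averaging and Ces\`aro means; and the symmetric identity plus the double commutant relations come from the duality $(E^{\pi})^{\iota}\cong E$. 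Two ingredients are left implicit in your sketch and deserve to be named: to recognize a degree-$k$ homogeneous commutant element as a right creation operator coming from $\tau(\mathcal{T}_{+}(E^{\pi}))$ you need the identification $(E^{\otimes k})^{\pi}\cong(E^{\pi})^{\otimes k}$, and to run the whole argument a second time with $(\pi(M)',E^{\pi},\iota)$ in place of $(M,E,\pi)$ you must verify that the corresponding level-one map is again surjective, which is part of what the duality theorem in \cite{MS2004} supplies. With those points filled in, the proposal is a faithful reconstruction of the cited proof rather than a new route.
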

We are now ready to show how the complete domination of $\Phi_{\mathfrak{z}}$
in the sense of Lyapunov by $\Phi_{W}$ implies that we can interpolate
the $W$'s at the $\mathfrak{z}$'s in Theorem \ref{thm:Absolute_NP_Thm}.
\begin{lem}
Let \[
\mathcal{M}=\overline{span}\{U^{*}c^{*}h\mid h\in H_{\sigma},\; c\in\mathcal{I}(\pi^{\mathcal{F}(E)},\mathfrak{z}_{i}\times\sigma),\;1\leq i\leq n\}.\]
 Then $\mathcal{M}$ is a closed subspace of $\mathcal{F}(E^{\pi})\otimes_{\iota}H_{\pi}$
that is invariant under $\tau(H^{\infty}(E^{\pi}))^{*}$.\end{lem}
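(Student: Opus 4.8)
The first assertion, that $\mathcal{M}$ is closed, is immediate from the definition, since $\mathcal{M}$ is presented as a closed span; the substance of the lemma is the invariance under $\tau(H^{\infty}(E^{\pi}))^{*}$. My plan is to reduce the invariance to a statement about the spanning vectors $U^{*}c^{*}h$, and then to exploit Theorem \ref{thm:Commutant_Induced} together with the elementary observation that precomposing an intertwiner with an operator in the commutant of the source representation yields another intertwiner.

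First I would note that, since every operator in $\tau(H^{\infty}(E^{\pi}))^{*}$ is bounded and $\mathcal{M}$ is closed, it suffices to check that each such operator carries each spanning vector $U^{*}c^{*}h$ back into $\mathcal{M}$; linearity and continuity then give invariance of the whole closed span. Next I would invoke Theorem \ref{thm:Commutant_Induced}, which gives $\tau(H^{\infty}(E^{\pi}))=U^{*}\pi^{\mathcal{F}(E)}(H^{\infty}(E))'U$, to write a typical element of $\tau(H^{\infty}(E^{\pi}))^{*}$ as $U^{*}Z^{*}U$ with $Z\in\pi^{\mathcal{F}(E)}(H^{\infty}(E))'$. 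I would be careful here not to assume that the non-self-adjoint commutant $\pi^{\mathcal{F}(E)}(H^{\infty}(E))'$ is itself $*$-closed; taking the adjoint simply moves me from $Z$ to $Z^{*}$, and that is all I need.

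Then, using that $U$ is unitary so that $UU^{*}=I$, I would compute, for a spanning vector determined by $c\in\mathcal{I}(\pi^{\mathcal{F}(E)},\mathfrak{z}_{i}\times\sigma)$,
\[
(U^{*}Z^{*}U)(U^{*}c^{*}h)=U^{*}Z^{*}c^{*}h=U^{*}(cZ)^{*}h.
\]
The crux is to show $cZ\in\mathcal{I}(\pi^{\mathcal{F}(E)},\mathfrak{z}_{i}\times\sigma)$, for then $U^{*}(cZ)^{*}h$ is again one of the spanning vectors of $\mathcal{M}$ (with the same index $i$), and we are done. For this, since $Z$ commutes with $\pi^{\mathcal{F}(E)}(F)$ for every $F\in H^{\infty}(E)$, and in particular for every $F\in\mathcal{T}_{+}(E)$, I would verify the intertwining identity
\[
(cZ)\pi^{\mathcal{F}(E)}(F)=c\,\pi^{\mathcal{F}(E)}(F)Z=(\mathfrak{z}_{i}\times\sigma)(F)\,cZ,\quad F\in\mathcal{T}_{+}(E),
\]
where the last equality uses that $c$ intertwines $\pi^{\mathcal{F}(E)}$ with $\mathfrak{z}_{i}\times\sigma$. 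This is exactly the condition defining membership of $cZ$ in $\mathcal{I}(\pi^{\mathcal{F}(E)},\mathfrak{z}_{i}\times\sigma)$.

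I expect the only genuine point requiring care — the main conceptual obstacle — to be the bookkeeping with adjoints. One must pass from $\tau(H^{\infty}(E^{\pi}))^{*}$ to operators of the form $U^{*}Z^{*}U$ with $Z$ in the commutant, and then recognize that the correct way to preserve the intertwiner structure is to precompose $c$ by $Z$ on the right (the right action of the commutant of the source representation on the space of intertwiners), so that the adjoint $(cZ)^{*}=Z^{*}c^{*}$ reproduces precisely the operator that was applied to the spanning vector. Once this is seen, everything else is routine.
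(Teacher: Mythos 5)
Your argument is correct and is essentially the paper's own proof: both reduce to the spanning vectors, use Theorem \ref{thm:Commutant_Induced} to write an element of $\tau(H^{\infty}(E^{\pi}))^{*}$ as $U^{*}Z^{*}U$ with $Z=U\tau(X)U^{*}$ in the commutant of $\pi^{\mathcal{F}(E)}(H^{\infty}(E))$, and then observe that $cZ$ is again an intertwiner so that $U^{*}(cZ)^{*}h\in\mathcal{M}$. The only difference is that you spell out the verification that $cZ\in\mathcal{I}(\pi^{\mathcal{F}(E)},\mathfrak{z}_{i}\times\sigma)$, which the paper states without computation.
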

\begin{proof}
For $X\in\tau(H^{\infty}(E^{\pi}))$, $U\tau(X)U^{*}$ lies in the
commutant of $\pi^{\mathcal{F}(E)}(H^{\infty}(E))$ by Theorem \eqref{thm:Commutant_Induced}.
Consequently $cU\tau(X)U^{*}\in\mathcal{I}(\pi^{\mathcal{F}(E)},\mathfrak{z}_{i}\times\sigma)$
for every $c\in\mathcal{I}(\pi^{\mathcal{F}(E)},\mathfrak{z}_{i}\times\sigma)$.
But then $\tau(X)^{*}U^{*}c^{*}h=U^{*}(U\tau(X)^{*}U^{*})c^{*}h=U^{*}(cU\tau(X)U^{*})^{*}h$
lies in $\mathcal{M}$ for all $U^{*}c^{*}h\in\mathcal{M}$.\end{proof}
\begin{lem}
\label{R} The correspondence, $U^{*}c^{*}h\to U^{*}c^{*}W_{i}^{*}$,
$c\in\mathcal{I}(\pi^{\mathcal{F}(E)},\mathfrak{z}_{i}\times\sigma)$,
defined on the generators of $\mathcal{M}$ extends to a well-defined
contraction operator on $\mathcal{M}$, say $R$, if and only if for
every integer $m\geq1$, for every choice of function $l:\{1,2,\ldots,m\}\to\{1,2,\ldots,n\},$
and for every choice of $m$ operators $c_{j}\in\mathcal{I}(\pi^{\mathcal{F}(E)},\mathfrak{z}_{l(j)}\times\sigma)$
the operator matrix inequality\[
(W_{l(i)}c_{i}c_{j}^{*}W_{l(j)}^{*})_{i,j=1}^{m}\leq(c_{i}c_{j}^{*})_{i,j=1}^{m}\]
is satisfied. In this event, $R$ commutes with the restriction of
$\tau(H^{\infty}(E^{\pi}))^{*}$to $\mathcal{M}$.\end{lem}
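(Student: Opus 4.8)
The plan is to recognize this lemma as a Gram-matrix (kernel positivity) statement in disguise and to reduce it to the elementary criterion for when a map defined on a spanning set of a Hilbert space extends to a well-defined contraction. Recall that criterion: if $\{g_\alpha\}$ is a family whose linear span is dense in a Hilbert space and $\{\tilde g_\alpha\}$ is a second family, then the assignment $g_\alpha\mapsto\tilde g_\alpha$ extends (necessarily uniquely) to a well-defined contraction on the closed span of the $g_\alpha$ if and only if $\Vert\sum_k\tilde g_{\alpha_k}\Vert\le\Vert\sum_k g_{\alpha_k}\Vert$ for every finite sum. The single inequality simultaneously forces well-definedness (apply it to a combination that vanishes, so the right-hand side is $0$, forcing the left to vanish) and contractivity, so no separate well-definedness argument is needed.

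Here the generators are $g=U^{*}c^{*}h$ with $c\in\mathcal{I}(\pi^{\mathcal{F}(E)},\mathfrak{z}_{i}\times\sigma)$ and $h\in H_{\sigma}$, and $R$ is to send $g$ to $\tilde g=U^{*}c^{*}W_{i}^{*}h$. Since the assignment is linear in $h$, scalars may be absorbed into $h$, so a general element of the span is $\sum_{j=1}^{m}U^{*}c_{j}^{*}h_{j}$ with $c_{j}\in\mathcal{I}(\pi^{\mathcal{F}(E)},\mathfrak{z}_{l(j)}\times\sigma)$ and $h_{j}\in H_{\sigma}$; note also that $\tilde g=U^{*}c^{*}(W_{i}^{*}h)$ is again a generator, so $R$ carries $\mathcal{M}$ into itself. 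First I would use that $U$ is a Hilbert space isomorphism (Theorem \ref{thm:Commutant_Induced}) to strip the $U^{*}$'s from all norms, and then expand the two Gram forms. Setting $\mathbf h=(h_{1},\dots,h_{m})\in H_{\sigma}^{(m)}$, one obtains
\[
\Bigl\Vert\sum_{j}U^{*}c_{j}^{*}h_{j}\Bigr\Vert^{2}=\sum_{i,j}\langle h_{i},c_{i}c_{j}^{*}h_{j}\rangle=\bigl\langle\mathbf h,(c_{i}c_{j}^{*})_{i,j=1}^{m}\,\mathbf h\bigr\rangle,
\]
and likewise $\Vert\sum_{j}U^{*}c_{j}^{*}W_{l(j)}^{*}h_{j}\Vert^{2}=\langle\mathbf h,(W_{l(i)}c_{i}c_{j}^{*}W_{l(j)}^{*})_{i,j=1}^{m}\mathbf h\rangle$. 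The required norm inequality, demanded for all $\mathbf h$ and all choices of $m$, $l$, and $c_{j}$, is then exactly the family of operator matrix inequalities $(W_{l(i)}c_{i}c_{j}^{*}W_{l(j)}^{*})\le(c_{i}c_{j}^{*})$, which is the stated equivalence.

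For the final assertion, assume $R$ exists and fix $X\in\tau(H^{\infty}(E^{\pi}))$. By Theorem \ref{thm:Commutant_Induced}, $Y:=U\tau(X)U^{*}$ lies in $\pi^{\mathcal{F}(E)}(H^{\infty}(E))'$, so for $c\in\mathcal{I}(\pi^{\mathcal{F}(E)},\mathfrak{z}_{i}\times\sigma)$ the operator $cY$ again lies in $\mathcal{I}(\pi^{\mathcal{F}(E)},\mathfrak{z}_{i}\times\sigma)$ with the \emph{same} index $i$, and, exactly as in the preceding lemma, $\tau(X)^{*}U^{*}c^{*}h=U^{*}(cY)^{*}h$. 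Applying $R$ to this last expression gives $U^{*}(cY)^{*}W_{i}^{*}h$; applying $\tau(X)^{*}$ instead to $RU^{*}c^{*}h=U^{*}c^{*}W_{i}^{*}h$ (a generator with vector $W_{i}^{*}h$) gives the same $U^{*}(cY)^{*}W_{i}^{*}h$. Hence $R\tau(X)^{*}=\tau(X)^{*}R$ on generators, and, $R$ being bounded, on all of $\mathcal{M}$ by density.

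As for the main obstacle, there is no deep difficulty: the content is entirely in the Gram-matrix bookkeeping. The point that most requires care is that the lone norm inequality must be seen to encode both the well-definedness of $R$ (the spanning set is overdetermined, since the same vector $U^{*}c^{*}h$ may arise from intertwiners attached to different indices) and its contractivity, and that the expansion of the two quadratic forms matches the stated inequality on the nose, with $W_{l(i)}$ landing on the left and $W_{l(j)}^{*}$ on the right after $c_{i}$ is passed across the inner product.
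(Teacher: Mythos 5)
Your proof is correct and follows essentially the same route as the paper's: both reduce the first assertion to the Gram-matrix identity $\Vert\sum_{j}U^{*}c_{j}^{*}h_{j}\Vert^{2}=\sum_{i,j}\langle h_{i},c_{i}c_{j}^{*}h_{j}\rangle$ (and its analogue for the images), and both obtain the commutation claim from the observation that $R$ acts by ``right multiplication'' by $W_{i}^{*}$ while $\tau(X)^{*}$ acts by ``left multiplication'' via the intertwiner $cU\tau(X)U^{*}$. Your version is slightly more explicit about how the single norm inequality also yields well-definedness, which the paper leaves implicit.
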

\begin{proof}
A linear combination of generators of $\mathcal{M}$ is a vector of
the form $k=\sum_{j=1}^{m}U^{*}c_{j}^{*}h_{j}$, where $c_{j}\in\mathcal{I}(\pi^{\mathcal{F}(E)},\mathfrak{z}_{l(j)}\times\sigma)$
for some $m$ and function $l:\{1,2,\ldots,m\}\to\{1,2,\ldots n\}$.
Since \[
\norm{k}^{2}=\sum_{j,i}\langle c_{i}c_{j}^{*}h_{j},h_{i}\rangle,\]
 while \[
\norm{\sum_{j=1}^{m}U^{*}c_{j}^{*}W_{l(j)}^{*}h_{j}}^{2}=\sum_{i,j}\langle W_{i(l)}c_{i}c_{j}^{*}W_{i(j)}^{*}h_{j},h_{l}\rangle,\]
the first assertion is immediate. But the second is also immediate
since $R$ is ``right multiplication'' by $W_{i}^{*}$ on a generator
of the form $U^{*}c^{*}h$, $c\in\mathcal{I}(\pi^{\mathcal{F}(E)},\mathfrak{z}_{i}\times\sigma)$,
i.e., $RU^{*}c^{*}h=U^{*}c^{*}W_{i}^{*}h$, while the restriction
of $\tau(X)^{*}$ to $\mathcal{M}$ acts by left multiplication for
all $X\in H^{\infty}(E^{\pi})$: $\tau(X)^{*}U^{*}c^{*}h=U^{*}(U\tau(X)^{*}U^{*})c^{*}h$.
\end{proof}
Since $\mathcal{M}$ is invariant for $\tau(H^{\infty}(E^{\pi}))^{*}$,
we obtain an ultra weakly continuous completely contractive representation
$\rho$ of $H^{\infty}(E^{\pi})$ on $\mathcal{M}$ by compressing
$\tau(H^{\infty}(E^{\pi}))$ to $\mathcal{M}$, i.e., \[
\rho(X):=P_{\mathcal{M}}\tau(X)\vert\mathcal{M},\qquad X\in H^{\infty}(E^{\pi}).\]
Since $\tau$ is isometric in the sense of \cite{MS98b} and since
$R^{*}$ commutes with $\rho(H^{\infty}(E^{\pi}))$, we may apply
our commutant lifting theorem \cite[Theorem 4.4]{MS98b} to conclude
that there is an operator $Y\in B(\mathcal{F}(E^{\pi})\otimes_{\iota}H_{\pi})$
of norm at most one such that $P_{\mathcal{M}}Y\vert\mathcal{M}=R^{*}$,
$Y\mathcal{M}^{\perp}\subseteq\mathcal{M}^{\perp}$, and $Y$ commutes
with $\tau(H^{\infty}(E^{\pi}))$ (see \cite[Theorem 2.6]{MS2010},
also). By Theorem \ref{thm:Commutant_Induced}, there is an $F\in H^{\infty}(E)$,
$\Vert F\Vert\leq1,$ such that $Y=U^{*}\pi^{\mathcal{F}(E)}(F)U$.
We conclude from the properties of $Y$ and the definition of $R$
that \[
U^{*}\pi^{\mathcal{F}(E)}(F)^{*}c^{*}h=(U^{*}\pi^{\mathcal{F}(E)}(F)^{*}U)U^{*}c^{*}h=Y^{*}U^{*}c^{*}h=RU^{*}c^{*}h=U^{*}c^{*}W_{i}^{*}h\]
for all $c\in\mathcal{I}(\pi^{\mathcal{F}(E)},\mathfrak{z}_{i}\times\sigma)$.
This, in turn, implies that\[
c\pi^{\mathcal{F}(E)}(F)=W_{i}c\]
for all such $c$. But $c\pi^{\mathcal{F}(E)}(F)=\widehat{F}(\mathfrak{z}_{i})c$
for all $c\in\mathcal{I}(\pi^{\mathcal{F}(E)},\mathfrak{z}_{i}\times\sigma)$,
by equation \eqref{eq:pt_evaluation} in Theorem \ref{AbsCont}. Therefore,\[
\widehat{F}(\mathfrak{z}_{i})c=W_{i}c\]
 for all $i$ and all $c\in\mathcal{I}(\pi^{\mathcal{F}(E)},\mathfrak{z}_{i}\times\sigma)$.
However, by hypothesis, all the $\mathfrak{z}_{i}$ lie in $\mathcal{AC}(E,\sigma)$.
Consequently, by the first assertion of Theorem \ref{AbsCont}, the
closed span of the ranges of the $c$'s in $\mathcal{I}(\pi^{\mathcal{F}(E)},\mathfrak{z}_{i}\times\sigma)$
is all of $H_{\sigma}$, for every $i$. We conclude that $\widehat{F}(\mathfrak{z}_{i})=W_{i}$
for every $i$. This completes the proof that if $\Phi_{W}$ completely
dominates $\Phi_{\mathfrak{z}}$ in the sense of Lyapunov, then there
is an $F\in H^{\infty}(E)$ that interpolates $W_{i}$ at $\mathfrak{z}_{i}$.
\begin{proof}
[Proof of the Converse] Part of the argument just given is reversible.
Suppose $F$ is an element of $H^{\infty}(E)$ of norm at most one
such that $\widehat{F}(\mathfrak{z}_{i})=W_{i}$, $i=1,2,\cdots,n$.
Then for each $c\in\mathcal{I}(\pi^{\mathcal{F}(E)},\mathfrak{z}_{i}\times\sigma)$
\[
c\pi^{\mathcal{F}(E)}(F)=\widehat{F}(\mathfrak{z}_{i})c=W_{i}c,\]
by equation \eqref{eq:pt_evaluation}. But then \[
(U^{*}\pi^{\mathcal{F}(E)}(F)^{*}U)U^{*}c^{*}=U^{*}c^{*}W_{i}^{*}\]
for all $c\in\mathcal{I}(\pi^{\mathcal{F}(E)},\mathfrak{z}_{i}\times\sigma)$.
Since the norm of $F$ is at most $1$ we conclude from Lemma \ref{R}
that for every integer $m\geq1$, for every choice of function $l:\{1,2,\ldots,m\}\to\{1,2,\ldots,n\},$
and for every choice of $m$ operators $c_{j}\in\mathcal{I}(\pi^{\mathcal{F}(E)},\mathfrak{z}_{l(j)}\times\sigma)$
the operator matrix inequality\[
(W_{l(i)}c_{i}c_{j}^{*}W_{l(j)}^{*})_{i,j=1}^{m}\leq(c_{i}c_{j}^{*})_{i,j=1}^{m}\]
is satisfied. So by Corollary \ref{cor:Alternate_Lyapunov_domination},
we conclude that $\Phi_{W}$ completely dominates $\Phi_{\mathfrak{z}}$
in the sense of Lyapunov.\end{proof}

\end{document}